\documentclass[journal,twoside,web]{ieeecolor}
\usepackage{common/sty/tac}
\usepackage{generic}
\begin{document}
\title{
  Contraction-Based Model Predictive Control\\ using Bilinear Koopman Realizations\\ with Proportional Error Bounds
}
\author{Yuki Higuchi and Kazuhiro Sato
  \thanks{This work was supported by JSPS KAKENHI Grant Number 26K03232.}%
  \thanks{Y. Higuchi and K. Sato are with the Department of Mathematical Informatics, Graduate School of Information Science and Technology, The University of Tokyo, Tokyo 113-8656, Japan, email: higuchi-yuki@g.ecc.u-tokyo.ac.jp (Y. Higuchi), kazuhiro@mist.i.u-tokyo.ac.jp (K. Sato)}
}
\markboth{\hskip25pc IEEE TRANSACTIONS AND JOURNALS TEMPLATE}{Author \MakeLowercase{\textit{et al.}}: Title}

\maketitle
\begin{abstract}
  Data-driven model predictive control based on Koopman operator theory is a promising approach for constrained control of nonlinear systems with unknown dynamics.
  This paper proposes a robust model predictive control framework for such systems using bilinear Koopman realizations with state- and input-dependent proportional approximation-error bounds that vanish at the target equilibrium.
  Since prediction in lifted coordinates with a finite-dimensional Koopman realization need not remain on the manifold of valid lifted states, multi-step prediction may leave the region where one-step error certificates apply.
  We avoid this difficulty by reprojecting each predicted lifted state onto the original state coordinates and lifting it again, yielding an error-aware discrete-time control-affine predictor in the original state space without assuming invariance of the Koopman dictionary.
  For this predictor, we develop a homothetic tube construction based on a discrete-time robust control contraction metric whose radius explicitly accounts for the proportional approximation error bounds.
  The tube tightening handles arbitrary continuously differentiable nonlinear constraints, and the resulting model predictive control problem includes terminal ingredients and a tube-radius penalty that exploits the vanishing uncertainty near the target.
  We prove robust satisfaction of the original nonlinear constraints, recursive feasibility, and exponential stability of the sampled true closed-loop system with high probability over the training data without requiring a globally optimal solution to the model predictive control problem.
  Numerical examples, including nonlinear obstacle-avoidance constraints, demonstrate robust stabilization and higher performance of the proposed approach compared to existing Koopman-based model predictive control methods in terms of smaller closed-loop cost and flexibility.
\end{abstract}

\begin{IEEEkeywords}
  Data-driven control, Koopman operator, Model predictive control, Nonlinear systems, Robust control
\end{IEEEkeywords}
\section{Introduction}\label{section:introduction}

Model predictive control (MPC)~\cite{MRRS00} is widely used for constrained dynamical systems because it can optimize performance while enforcing state and input limits.
Its effectiveness, however, depends on the availability of a predictor that is accurate enough for closed-loop decision making.
For systems whose dynamics are difficult to derive from first principles, such as soft robots~\cite{ABR15}, this requirement naturally leads to predictors learned from data.

\subsubsection*{Data-driven Bilinear Koopman Realizations}
Koopman operator theory~\cite{Koopman31} provides a systematic way to build such predictors: the state is lifted through a nonlinear function, which is called a lifting map,
and a finite-dimensional realization is fitted for the lifted variables.
Finite-dimensional linear realizations are attractive because they convert nonlinear prediction into a linear lifted-space realization, but their approximation quality can be limited~\cite{KY22a}.
Bilinear Koopman realizations offer a richer alternative~\cite{Williams+16, Surana16}: they can approximate control-affine dynamics arbitrarily well under suitable richness assumptions, and they often outperform linear realizations in finite-data experiments for general nonlinear systems~\cite{BFV20}.
Moreover, recent identification methods provide state- and input-dependent proportional error bounds for data-driven bilinear realizations~\cite{Strasser+25, Strasser+26}.
Since these bounds scale with the lifted-state and input magnitudes and vanish at the target equilibrium, they provide structural information that is directly useful for robust controller design.
Because finite data and a finite-dimensional approximation inevitably leave residual errors, the learned Koopman realization should be treated as uncertain.
This leads to robust MPC (RMPC), where the optimization is designed to account explicitly for these errors.

\subsubsection*{Bilinear Koopman Realizations and Multi-Step Prediction}
The main technical obstruction for RMPC using bilinear Koopman realizations is the consistency of multi-step prediction.
The range of the lifting map is generally a nonlinear manifold in the lifted coordinates, and a finite-dimensional learned realization need not map this manifold into itself.
If MPC propagates in the lifted space directly, later prediction may be evaluated at off-manifold points.
Therefore, in the context of combining RMPC, it is necessary to evaluate the error when making predictions from off-manifold points.
For linear realizations, this issue can be circumvented by exploiting their linear structure~\cite{Zhang+22,MCV22,KTZS25}; however, it remains challenging for bilinear realizations.
Koopman dictionary invariance~\cite{GP22} removes this issue~\cite{Worthmann+24,BGSW25} only under a restrictive structural assumption, whereas penalizing deviations from the manifold~\cite{KY22a, KY22b, KY23} does not by itself yield hard guarantees.
Another approach~\cite{Xiong+25} has also shown empirical effectiveness but lacks theoretical guarantees.
We instead restore consistency by projecting the lifted prediction onto the original-state coordinates and re-lifting the result at each prediction step.
This construction yields an error-aware predictor in the original state space with a discrete-time control-affine uncertainty description.

\subsubsection*{Contraction-Based Tube MPC}
For uncertain control-affine systems, control contraction metrics (CCM)~\cite{MS15} and robust CCM (RCCM)~\cite{Zhao+22} give differential conditions under which closed-loop trajectories contract in a state-dependent metric.
RCCM-based MPC schemes~\cite{SZK23, GSS24, ZS24} use this contraction structure to build tubes around nominal trajectories, guaranteeing robust constraint satisfaction without solving a min--max problem online.
This makes tube MPC (TMPC) a suitable basis for the proposed Koopman-based predictor, whose uncertainty size depends on the current state and input.

\subsubsection*{Related Work}
The idea of projecting the lifted prediction onto the original-state coordinates and re-lifting the result at each prediction step has been proposed in several recent studies.
However, one of these methods does not incorporate state constraints into the MPC optimization problem~\cite{Schimperna+25}, whereas another uses a constant error bound, which can render the tightened constraint set empty as the prediction horizon increases~\cite{Schimperna+26}. The former guarantees exponential stability (ES), whereas the latter establishes asymptotic stability (AS) of the sampled closed-loop systems; both guarantees require the MPC optimization problem to be solved exactly.
A recent work~\cite{JL26} also proposes an RCCM-based TMPC for bilinear Koopman realizations with approximation errors and propagates the predictor directly in the lifted space. As mentioned above, direct propagation in the lifted space may evaluate the predictor at points that do not lie on the manifold. To account for off-manifold operation, the work augments valid lifted samples with small perturbations when learning RCCM\@. That approach does not explicitly exploit proportional bounds and instead relies on a constant bound, yielding input-to-state stability (ISS) guarantees rather than ES\@.
Furthermore, constructing a terminal set, that guarantees recursive feasibility, requires an additional assumption other than the RCCM condition and
is challenging to construct in practice, as acknowledged by the authors~\cite[Sec. VII]{JL26}.

\subsubsection*{Contributions}
This paper develops an RMPC framework that combines data-driven bilinear Koopman realizations with proportional error bounds and contraction-based TMPC\@.
The contributions are as follows.
\begin{itemize}
  \item We use a bilinear Koopman realization with a proportional approximation-error bound~\cite{Strasser+26} and the re-lifting described above to obtain an error-aware predictor in the original coordinates.
  \item We construct a discrete-time RCCM-based homothetic tube for this predictor and formulate the corresponding TMPC problem, building on~\cite{SZK23,GSS24,ZS24}.
  \item We prove robust satisfaction of the original nonlinear constraints, recursive feasibility, and exponential stability of the sampled true closed-loop system without requiring a globally optimal solution to the MPC problem.
\end{itemize}
To the best of our knowledge, this is the first RMPC method to explicitly exploit proportional error bounds for bilinear Koopman realizations.
Once a discrete-time RCCM is available, the terminal ingredients admit an explicit construction.
Theoretical comparison with related work is summarized in~\cref{table:comparison}.
\begin{table*}[t]
  \centering
  \caption{Comparison with related work}
  \label{table:comparison}
  \begin{tabular}{c|cccc} 
    \hline 
    Method                      & Prediction Domain & Error Bound used in MPC & Robust Mechanism      & Main Guarantee              \\ \hline 
    \mbox{\cite{Schimperna+25}} & State Space       & -                       & -                     & ES (requires global optimum) \\
    \mbox{\cite{Schimperna+26}} & State Space       & Constant                & Direct Set Tightening & AS (requires global optimum) \\
    \mbox{\cite{JL26}}          & Lifted Space      & Constant                & RCCM Tube             & ISS                         \\
    Proposed                    & State Space       & Proportional            & RCCM Tube             & ES                          \\
    \hline 
  \end{tabular}
\end{table*}
Numerical examples for an inverted pendulum demonstrate robust stabilization and higher performance of the proposed approach compared to existing Koopman-based MPC methods in terms of smaller closed-loop cost and flexibility.
Furthermore, we demonstrate the effectiveness of the proposed method for strict nonlinear obstacle-avoidance constraints in an omni-directional robot example.
Although existing methods~\cite{Worthmann+24,BGSW25,Xiong+25, Schimperna+26} can theoretically handle nonlinear constraints, their numerical examples consider only linear constraints.
Their construction of error-aware constraint tightening relies on computing Minkowski sums of sets. Computing the required set operations can be challenging for general nonlinear constraints.
In contrast, a distinguishing feature of the proposed method is that it enables an explicit construction of error-aware constraint tightening even for nonlinear constraints by the design of an RCCM-based tube.

\subsubsection*{Comparison with the Conference Version}
Compared with the conference version~\cite{HS26}, this paper uses proportional approximation-error bounds and the associated tube design to upgrade the guarantees from practical convergence to ES and to strengthen the flexibility for nonlinear constraints.

\subsubsection*{Organization}
The remainder of this paper is organized as follows.
\Cref{section:preliminary} reviews the problem setup, SafEDMD~\cite{Strasser+26}, and TMPC.
\Cref{section:proposed_method} derives the error-aware predictor and presents the discrete-time RCCM-based TMPC algorithm with theoretical analysis.
\Cref{section:numerical_experiment} provides numerical validation.

\subsubsection*{Notation}
For integers $a$ and $b$, we denote
$\integerset{a,b}=\{a,a+1,\ldots,b\}$.
Let $\mathbb S_+^n$ denote the set of $n\times n$ symmetric positive definite
matrices. We write $A\preceq B$ when $B-A$ is positive semi-definite.
The zero vector and the $i$-th standard basis vector
in $\setR[n]$ are denoted by $0_n$ and $e_i$ respectively,
the $m\times n$ zero matrix
by $0_{m\times n}$, and the $n\times n$ identity matrix by $I_n$.
For a vector $x$ and a symmetric positive definite matrix $M$, define
$\|x\|_M=\sqrt{x^\top Mx}$. If $M$ is omitted, $\|x\|$ denotes the Euclidean norm.
The Cholesky factorization of $M$ is denoted by
$M=(M^{1/2})^\top M^{1/2}$. $\otimes$ denotes the Kronecker product. $\|\cdot\|_F$ denotes the Frobenius norm.
$\Ball(x,r)$ denotes the closed ball centered at $x$ with radius $r$.
Let $\mu_X$ denote the normalized Lebesgue probability measure on the compact set $X$. We write $L^2(X,\mathbb{R}) := L^2_{\mu_X}(X,\mathbb{R})$ for the space of $\mu_X$-square-integrable real-valued functions on $X$.

\section{Preliminaries}\label{section:preliminary}

\subsection{Problem Setup}\label{sec:problem_setup}
We consider the unknown nonlinear control system
\begin{equation}
  \dot{x}(t) = \freal(x(t)) + \greal(x(t))u(t).
  \label{eq:true_system_continuous}
\end{equation}
Here, $x(t) \in\Xspace\subset\setR[\nx]$ is the state at time $t \ge 0$,
$u : [0, \infty) \to \Uspace\subset\setR[\nuinput]$ is the control input,
$\Xspace$ and $\Uspace$ are compact sets.
We assume that $e_i \in \Uspace$ for all $i\in\integerset{1,\nuinput}$.
$\freal:\setR[\nx]\to\setR[\nx]$ and $\greal:\setR[\nx]\to\setR[\nx\times\nuinput]$ are unknown functions.
For an initial condition $x(0)=\hat{\x}\in\Xspace$ and a constant input
$u(\tau) = \uinput \in \Uspace$ over $\tau \in [0,t]$, we denote the solution of~\eqref{eq:true_system_continuous},
provided it exists, at time $t\geq0$ by $x(t;\hat{\x},\uinput)$.

We consider the continuous-time system~\eqref{eq:true_system_continuous} under zero-order-hold input with a sampling time $\Ts> 0$. For every integer $k\ge 0$, we define the discrete-time state $\x(k) = x(k\Ts)$ to denote the sampled state and define $\uinput(k)$ by $u(t)=\uinput(k)$ for $t\in[k\Ts,(k+1)\Ts)$.

Our control objective is to drive the sampled state $\x(k)$ to the origin $0_{\nx}$ while satisfying
\[
  (\x(k),\uinput(k))\in\Zsafe,\quad k = 0,1,\ldots,
\]
where $\Zsafe\subseteq\Xspace\times\Uspace$ is the compact constraint set defined by
\[
  \Zsafe=\{(\x,\uinput)\in\setR[\nx]\times\setR[\nuinput]\mid
  h_j(\x,\uinput)\le 0,\ j\in\integerset{1,n_h}\},
\]
where each function $h_j:\setR[\nx]\times\setR[\nuinput]\to\setR$,
$j\in\integerset{1,n_h}$, is continuously differentiable.
We assume that the state $\x$ is measurable,
$
  h_j(0_{\nx},0_{\nuinput}) < 0\quad \forall j\in\integerset{1,n_h},
$
and $\freal(0_{\nx})=0_{\nx}$, i.e., the origin is an equilibrium under the
zero input $u=0_{\nuinput}$.
Even if the latter equilibrium normalization is not satisfied in the original coordinates,
it can often be satisfied by a simple change of coordinates, as explained in
the following remark.
Based on measured data, we learn a Koopman realization and design a
controller that enforces the nonlinear constraints $\Zsafe$ at all sampling instants
and steers the sampled state $\x(k)$ to $0_{\nx}$.

\begin{remark}
  Suppose that the original dynamics admit an equilibrium pair
  $(x_e,u_e)$ satisfying
  $\freal^{\rm org}(x_e)+\greal^{\rm org}(x_e)u_e=0_{\nx}$.
  Then, by introducing the deviation variables
  $\tilde{x}=x^{\rm org}-x_e$ and
  $\tilde{u}=u^{\rm org}-u_e$, the equilibrium pair
  $(x_e,u_e)$ is mapped to $(0_{\nx},0_{\nuinput})$.
  In these coordinates, the resulting dynamics satisfy
  $\freal(0_{\nx})=0_{\nx}$.
\end{remark}

\subsection{SafEDMD}\label{sec:safedmd}

\subsubsection*{Data-driven Bilinear Koopman Realization}

The Koopman operator $\Koopman_t^\uinput$ corresponding to~\eqref{eq:true_system_continuous} under the constant input $u(\tau) = \uinput \in \Uspace$ over $\tau \in [0,t]$ is defined as
\begin{equation}
  (\Koopman_t^\uinput \phi)(\hat{\x}) = \phi(x(t;\hat{\x},\uinput))
\end{equation}
for all $\hat{\x} \in \Xspace$, $\phi \in L^2(\Xspace, \setR)$, and $t \ge 0$. The real-valued functions $\phi$ are called observables.
For a real-vector-valued function $\liftBig: \Xspace \to \setR[\nxlift+1]$, we use the notation $\Koopman_t^\uinput \liftBig$ to denote the vector-valued function defined as
\begin{equation}
  (\Koopman_t^\uinput \liftBig)(\hat{\x}) = [(\Koopman_t^\uinput \phi_1)(\hat{\x}) \ \cdots \  (\Koopman_t^\uinput \phi_{\nxlift+1})(\hat{\x})]^\top
\end{equation}
for all $\hat{\x} \in \Xspace$, where $\phi_\ell$ is the $\ell$-th component of $\liftBig$ for all $\ell \in \integerset{1,\nxlift+1}$.
The Koopman operator $\Koopman_t^\uinput$ is an infinite-dimensional linear operator that describes the evolution of the observables.

As shown in previous works~\cite{POR20, BGSW25, Philipp+2025}, the Koopman operator $\Koopman_t^\uinput$ approximately inherits the control-affine structure, i.e.,

\begin{equation}
  \Koopman_t^\uinput \approx \Koopman_t^0 + \sum_{i=1}^{\nuinput} (\Koopman_t^{e_i} - \Koopman_t^0) \uinput_i
\end{equation}
holds, where $\Koopman_t^0$ and $\Koopman_t^{e_i},\ i \in \integerset{1,\nuinput}$ are the Koopman operators corresponding to the constant inputs $0_{\nuinput}$ and $e_i$, respectively, and $e_i$ is the $i$-th unit vector of $\setR[\nuinput]$.
To determine data-driven estimates of $\Koopman_{T_s}^\uinput$ for a sampling time $T_s > 0$ with proportional error bounds, we apply SafEDMD~\cite{Strasser+26}.

\subsubsection*{Basic SafEDMD Formulation}

First, we choose the dictionary of observables
\begin{equation}
  \liftBig(\x) = [1\ \x^\top\ \phi_{\nx+1}(\x) \ \cdots \  \phi_{\nxlift}(\x)]^\top, \label{eq:lift_function}
\end{equation}
where $\phi_{\ell}:\setR[\nx]\to\setR$, $\ell \in \integerset{\nx+1,\nxlift}$, are continuously differentiable functions satisfying $\phi_{\ell}(0_{\nx})=0$ for all $\ell \in \integerset{\nx+1,\nxlift}$, and $\nxlift > \nx$ is the dimension of the lifted space excluding the constant observable.
This dictionary contains the original state coordinates, a property that will be used later to reproject lifted predictions onto the original state space.
Next, we collect a data set $\dataset = \{\x_j^{\tilu} , \y_j^{\tilu}\}_{j=1}^{\K}$, where $\y_j^{\tilu} = x(T_s; \x_j^{\tilu}, \tilu)$ for $\tilu \in \{0_{\nuinput}, e_1, \ \cdots \ , e_{\nuinput}\}$.
For each $\tilu\in\{0_{n_u},e_1,\ldots,e_{n_u}\}$, the initial states $\x_1^{\tilu},\ldots,\x_{\K}^{\tilu}$ are drawn i.i.d.\ according to $\mu_{\Xspace}$, independently across $\tilu$.

Using the facts that $\liftBig$ contains the constant observable $1$ and $\liftBig(0_{\nx}) = (\Koopman_{T_s}^0 \liftBig)(0_{\nx})$ holds because $\freal(0_{\nx})=0_{\nx}$, we separate the constant and non-constant components to obtain the following structure of the Koopman operator acting on the chosen observables $\liftBig$:
\begin{equation*}
  \Koopman_{T_s}^0 = \begin{bmatrix}
    1 & 0                       \\
    0 & (\Koopman_{T_s}^0)_{22}
  \end{bmatrix},\quad
  \Koopman_{T_s}^{e_i} = \begin{bmatrix}
    1                           & 0                           \\
    (\Koopman_{T_s}^{e_i})_{21} & (\Koopman_{T_s}^{e_i})_{22}
  \end{bmatrix}.
\end{equation*}
Then, we build the finite-dimensional data-driven approximations
\begin{equation*}
  \Koopman_{T_s}^0 = \begin{bmatrix}
    1 & 0_{\nxlift}^\top \\ 0_{\nxlift} & A
  \end{bmatrix}, \quad
  \Koopman_{T_s}^{e_i} = \begin{bmatrix}
    1   & 0_{\nxlift}^\top \\
    b_i & B_i
  \end{bmatrix}.
\end{equation*}
Here, $A \in \setR[\nxlift\times\nxlift]$, $B_i \in \setR[\nxlift\times\nxlift]$, and $b_i \in \setR[\nxlift]$, $i \in \integerset{1, \nuinput}$, are obtained by solving the following optimization problems:

\begin{equation}\label{eq:koopman_optimization}
  \begin{aligned}
     & \underset{A\in \setR[\nxlift\times\nxlift]}{\minimize}  || Y^0 - A X^0 ||_F,                                                       \\
     & \underset{\substack{b_i \in \setR[\nxlift]                                   \\ B_i \in \setR[\nxlift \times \nxlift]}}{\minimize}
    || Y^{e_i} - [b_i\ B_i] X^{e_i} ||_F, \quad i \in \integerset{1,\nuinput},
  \end{aligned}
\end{equation}
where
\begin{equation*}
  \begin{aligned}
    X^0       & = [0_{\nxlift}\ I_{\nxlift}]
                  [\liftBig(\x_1^0)\cdots\liftBig(\x_{\K}^0)],               \\
    X^{e_i}   & = [\liftBig(\x_1^{e_i})\cdots\liftBig(\x_{\K}^{e_i})],       \\
    Y^{\tilu} & = [0_{\nxlift}\ I_{\nxlift}]
                  [\liftBig(\y_1^{\tilu})\cdots\liftBig(\y_{\K}^{\tilu})],   \\
    \tilu     & \in \{0_{\nuinput}, e_1, \ \cdots \ , e_{\nuinput}\}. \notag
  \end{aligned}
\end{equation*}
Based on this formulation, we obtain the following one-step lifted representation:
\begin{equation}\label{eq:koopman_lifted}
  \lift(\xnext) = A\lift(\x) + B_0 \uinput + \sum_{i=1}^{\nuinput} (B_i - A) \uinput_i \lift(\x) + \xi(\x, \uinput),
\end{equation}
where $\lift(\x) = [0_{\nxlift}\ I_{\nxlift}]\liftBig(\x)$, $B_0 = [b_1\ \cdots \ b_{\nuinput}]$,
\begin{equation}
  \truesystem = x(T_s; \x, \uinput)
  \label{eq:true_system}
\end{equation}
and $\xi(\x, \uinput)$ is the approximation error.
The bound below is certified for lifted states generated by original states through the chosen dictionary.
In the following, we use $\truesystem$ as the discrete-time true system obtained by sampling the continuous-time system~\eqref{eq:true_system_continuous} with the sampling time $T_s$.

\subsubsection*{Proportional Error Bound for SafEDMD}
We define the dictionary $\dict \coloneq \{\liftBig_\ell\}_{\ell=1}^{\nxlift+1}$ representing the $(\nxlift+1)$-dimensional subspace spanned by the chosen observables $\liftBig$.
The following lemma provides a proportional error bound for the SafEDMD approximation of the Koopman operator, which is crucial for the subsequent robust control design.
\begin{lemma}[{\cite[Cor.~3.2.]{Strasser+26}}]\label{lemma:proportional_error_bound}
  If there exists a proportional error bound on the projection error, i.e.,
  \begin{equation}\label{eq:projection_error_bound}
    \|(\Koopman_{T_s}^\uinput \liftBig)(\x) - (P_{\dict} \Koopman_{T_s}^\uinput|_{\dict} \liftBig)(\x) \| \le  \tilde{c}_x\|\lift(\x)\| + \tilde{c}_u\|\uinput\|,
  \end{equation}
  then for any probabilistic tolerance $\beta \in (0,1)$, amount of data $\K_0 \in \mathbb{N}$, and sampling time $T_s > 0$, there exist constants $\bar{c}_x, \bar{c}_u \in \mathcal{O}(1/\sqrt{\beta \K_0} + T_s^2)$ such that
  \begin{equation}\label{eq:error_bound_proportional}
    \|\xi(\x, \uinput)\| \le \cx\|\lift(\x)\| + \cu\|\uinput\|
  \end{equation}
  holds for all $\K \geq \K_0, \x \in \Xspace$, and $\uinput \in \Uspace$ with probability $1-\beta$,
  where $\cx = \tilde{c}_x + \bar{c}_x, \cu = \tilde{c}_u + \bar{c}_u$,
  the probability is defined with respect to the product probability measure $\mu_{\Xspace}^{\otimes \K(1+\nuinput)}$ and $P_{\dict}$ is the orthogonal projection onto the subspace spanned by $\dict$.
\end{lemma}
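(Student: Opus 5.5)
The bound follows from splitting the error into a projection part and an estimation part and bounding each separately. Write $\mathcal{K}^\uinput := \Koopman_{T_s}^\uinput$ and let $\mathcal{K}^\uinput_{\dict} := P_{\dict}\Koopman_{T_s}^\uinput|_{\dict}$ be its compression onto $\dict$. By control-affinity, the compression is represented on the coordinates $\liftBig$ by a matrix of the form $L_0 + \sum_i (L_{e_i}-L_0)\uinput_i$. Here $L_0, L_{e_i}$ have the same block structure as the data-driven matrices, because the constant observable is preserved and $\freal(0_{\nx})=0_{\nx}$.

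Taking the non-constant rows $[0_{\nxlift}\ I_{\nxlift}]$ of $(\mathcal{K}^\uinput\liftBig)(\x)=\liftBig(\xnext)$, the error in \eqref{eq:koopman_lifted} splits as
\[
\xi(\x,\uinput) = [0\ I]\bigl((\mathcal{K}^\uinput\liftBig)(\x) - (\mathcal{K}^\uinput_{\dict}\liftBig)(\x)\bigr) + [0\ I]\bigl((\mathcal{K}^\uinput_{\dict}-\hat{\mathcal K}^\uinput)\liftBig\bigr)(\x),
\]
where $\hat{\mathcal K}^\uinput$ is the bilinear SafEDMD surrogate. The first term is bounded by assumption \eqref{eq:projection_error_bound}, which gives $\tilde c_x\|\lift(\x)\|+\tilde c_u\|\uinput\|$.

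For the second term, write it as
\[
(A_\dict - A)\lift(\x) + \sum_i \uinput_i\bigl[(b_{i,\dict}-b_i) + (B_{i,\dict}-B_i - A_\dict + A)\lift(\x)\bigr].
\]
The key structural point is that the $0$-input block has no affine part, so this expression vanishes at $(\x,\uinput)=(0,0)$. Since $\Uspace$ and $\Xspace$ are compact, $\|\uinput_i\lift(\x)\|\le \max_{\Uspace}\|\uinput\|\,\|\lift(\x)\|$. The term therefore has norm at most
\[
\bigl(\|A_\dict-A\| + \bar u\textstyle\sum_i\|B_{i,\dict}-B_i-A_\dict+A\|\bigr)\|\lift(\x)\| + \bigl(\textstyle\sum_i\|b_{i,\dict}-b_i\|^2\bigr)^{1/2}\|\uinput\|.
\]
It thus remains to bound the matrix estimation errors $\|A_\dict-A\|$, $\|[b_{i,\dict}\ B_{i,\dict}]-[b_i\ B_i]\|$ by $\mathcal O(1/\sqrt{\beta\K_0}+T_s^2)$ with probability $1-\beta$.

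This estimation step is the main obstacle. I would handle it by viewing the least-squares solutions of \eqref{eq:koopman_optimization} as $A = (\tfrac1\K Y^0X^{0\top})(\tfrac1\K X^0X^{0\top})^{-1}$, and similarly for the other problems. The empirical Gram and cross-covariance matrices are i.i.d. averages of bounded random matrices, since $\liftBig$ is continuous on compact $\Xspace$. Their expectations under $\mu_\Xspace$ are exactly the Gram and cross-Gram matrices that define the $L^2$-projection $\mathcal{K}^\uinput_{\dict}$. Chebyshev's (Markov's) inequality on the Frobenius deviations, with a union bound over the $1+\nuinput$ datasets, gives deviations of order $1/\sqrt{\beta\K}$, and hence $1/\sqrt{\beta\K_0}$, with probability $1-\beta$. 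Perturbation of the inverse then requires invertibility of the population Gram matrix (linear independence of the dictionary) and $\K_0$ large enough that the empirical Gram stays well-conditioned.

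The $T_s^2$ contribution comes from the control-affine approximation itself. The true $\mathcal K^\uinput_{T_s}$ differs from $\mathcal K^0+\sum_i(\mathcal K^{e_i}-\mathcal K^0)\uinput_i$ by $\mathcal O(T_s^2)$, as seen from a Taylor expansion of the flow in $t$: the generator is affine in $\uinput$, so the first-order terms match. This discrepancy is also proportional to $\|\uinput\|$ and $\|\lift(\x)\|$, because it vanishes at $\uinput=0$ and at the equilibrium, using smoothness of $\liftBig$ and $\liftBig(0)$-invariance. Collecting the constants into $\bar c_x,\bar c_u$ yields \eqref{eq:error_bound_proportional}.
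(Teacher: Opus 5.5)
The paper gives no proof of its own; it imports the lemma from the SafEDMD corollary. There, $\cx=\tilde c_x+\bar c_x$ and $\cu=\tilde c_u+\bar c_u$ arise as in your plan: the hypothesis covers the projection part, and $\bar c_x,\bar c_u$ bound the remainder via an estimation error of order $1/\sqrt{\beta\K_0}$ and a sampling (control-affinity) error of order $T_s^2$. So your route is the right one. However, the step on which proportionality rests is justified incorrectly.

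\textbf{The zero affine column.} You claim the $0$-input compression $L_0$ has zero affine column ``because the constant observable is preserved and $\freal(0_{\nx})=0_{\nx}$''. That gives $\liftBig(\Freal(0_{\nx},0_{\nuinput}))=\liftBig(0_{\nx})$ for the true operator. An $L^2(\mu_{\Xspace})$ compression does not inherit pointwise identities, because its affine coefficients are inner products, not evaluations at the origin. For example, take $\dot x=x^2$ on $\Xspace=[-1,1]$ with the odd observables $x,x^3$. The constant coefficient of the projection of $x\mapsto\Freal(x,0)$ is its mean, $\frac12\int_{-1}^{1}\frac{x}{1-T_s x}\,dx=\frac{T_s}{3}+\mathcal O(T_s^3)\neq0$.

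What actually forces the affine column to vanish is the hypothesis itself. At $(\x,\uinput)=(0_{\nx},0_{\nuinput})$, which lies in $\Xspace\times\Uspace$ since $h_j(0_{\nx},0_{\nuinput})<0$, its right-hand side is zero. Hence $(P_{\dict}\Koopman^{0}_{T_s}|_{\dict}\liftBig)(0_{\nx})=\liftBig(0_{\nx})$.

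\textbf{Where you need it.} This fact is used twice.
\begin{itemize}
\item Without it, your remainder has a constant offset at the origin and cannot be proportional.
\item Your estimation step relies on it silently. The SafEDMD $A$-problem regresses $\lift(\y)$ on $\lift(\x)$ \emph{without} the constant regressor. Its population limit is therefore the projection onto $\mathrm{span}\{\phi_1,\dots,\phi_{\nxlift}\}$, not onto $\mathrm{span}\,\dict$. Your assertion that the expectations are exactly the Gram matrices defining $P_{\dict}\Koopman^{\uinput}_{T_s}|_{\dict}$ holds for the $e_i$-problems. For the $0$-problem it holds only once the affine coefficient is known to vanish; then the two projections coincide, because the projection onto the larger span already lies in the smaller one.
\end{itemize}

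\textbf{Smaller points.}
\begin{itemize}
\item Your displayed split treats $P_{\dict}\Koopman^{\uinput}_{T_s}|_{\dict}$ as exactly $L_0+\sum_i(L_{e_i}-L_0)\uinput_i$. This is false for $T_s>0$, since affinity in $\uinput$ holds for the generator, not the flow. The sampling discrepancy you mention at the end must enter as a separate third term, compressed by $P_{\dict}$ and converted into a matrix bound.
\item To get $\bar c_u\in\mathcal O(T_s^2)$ from that term, you need the $\uinput$-derivative of its affine column to be $\mathcal O(T_s^2)$. This holds because $\partial_{\uinput_i}\Koopman^{\uinput}_{T_s}$ and $\Koopman^{e_i}_{T_s}-\Koopman^{0}_{T_s}$ agree to first order in $T_s$. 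Vanishing at $\uinput=0$ plus a uniform $\mathcal O(T_s^2)$ bound is not enough.
\item The Taylor argument also needs more regularity (e.g.\ $C^2$ observables and $C^1$ vector fields) than the $C^1$ dictionary assumed in the paper.
\end{itemize}
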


\begin{remark}
  The proportional error bound on the projection error~\eqref{eq:projection_error_bound} is consistent with recent finite-data residual error bounds for Koopman-based approximations.
  In particular, uniform projection-error bounds can be derived from polynomial tests, and interpolation arguments can be used to obtain proportional bounds; see, e.g.,~\cite{ITS24,Kohne+25,YM25}.
\end{remark}


\subsection{Tube-Based MPC}

\subsubsection*{Basic Robust MPC Formulation}
In real-world applications, the prediction equation often has uncertainties due to approximation errors, disturbances, and noise.
In such cases, the true system can be represented by discrete-time dynamics with uncertainties:
\begin{equation}
  \xnext = \fW(\x, \uinput, \d), \d\in\D,
  \label{eq:disturbed_system_mpc}
\end{equation}
where $\d\in\D$ is an unknown uncertainty, and $\D$ is a compact set that bounds the possible uncertainties.
MPC that guarantees constraint satisfaction under prediction
uncertainties as in~\cref{eq:disturbed_system_mpc} is called robust MPC
(RMPC). RMPC at time $k$ proceeds as follows:
\rmpcenumerate{\fW}{eq:disturbed_system_mpc}
Here, $\x(k)$ is the observed state of the true system at time $k$.

At each control step, RMPC predicts future trajectories over the horizon $\N$ and
solves an optimization problem against worst-case uncertainties to find a robust optimal control policy.
However, solving this min-max problem directly is often computationally expensive~\cite{LCRM04}.

\subsubsection*{Basic Tube-based MPC Formulation}
A practical alternative is tube-based MPC (TMPC).
TMPC first defines a nominal trajectory without uncertainties as
$\nox_{k+1}=\fW(\nox_k,\nou_k,0)$ for all $k\ge0$.
Here, $\nox_k\in\Xspace$ and $\nou_k\in\Uspace$ are the nominal state and input.
Then, a tube $\tube_{\nox_k}$ and a feedback law
$\kappa:\Xspace\times\Zsafe\to\Uspace$ are designed so that all possible
closed-loop true states remain inside the tube. The tube and feedback law are designed to satisfy
\tubefeedbackcond{\fW}
Thus, if the initial state belongs to $\tube_{\nox_0}$, all possible states controlled by $\kappa$
remain in $\tube_{\nox_k}$ for all $k\ge0$.

Next, nominal constraints ${\Zsafe}'_k$ are imposed so that
\nominalconst{useZ}
holds,
which guarantees
\tmpcprop{useZ}
In other words, if the initial state belongs to the initial tube and the
nominal trajectories satisfy tightened nominal constraints, then all
true trajectories controlled
by the feedback law satisfy the original constraints.
Based on this, TMPC runs as follows:
\tmpcenumerate{eq:disturbed_system_mpc}{\fW}{useZ}
The resulting policy $\pi(\x(k'),k')=\kappa(\x(k'),\noxt{k'-k},\nout{k'-k})$
is feasible for the original RMPC problem while reducing the computational burden
compared with direct min-max RMPC\@.

\section{Proposed Method}\label{section:proposed_method}

This section derives the three main ingredients of the proposed framework: an error-aware discrete-time predictor based on~\cref{sec:safedmd}, a discrete-time RCCM for this predictor, and a discrete-time RCCM-based TMPC formulation.
The construction builds on CCM-based TMPC studies~\cite{SZK23,GSS24,ZS24} and is tailored to the proportional Koopman approximation-error bound introduced in~\cref{sec:safedmd}.

\subsection{Error-Aware Predictor}

We first examine the multi-step prediction issue for bilinear Koopman realizations in more detail.
The discussion below closely follows Section~3 of existing work~\cite{GMSW25}; we adapt it to the family of Koopman operators associated with constant control inputs.

We define the $\nx$-dimensional embedded submanifold induced by a lifting function $\lift$ as
\begin{equation*}
  \mathcal M = \{ \lift(\x) \mid \x \in \setR[\nx] \} \subseteq \setR[\nxlift].
\end{equation*}
This manifold $\mathcal M$ is invariant with respect to the true Koopman operator $\Koopman_t^\uinput$, that is, for any $t\ge 0,  \uinput\in\Uspace$, it holds that
\begin{equation*}
  ( \Koopman_t^\uinput \lift ) (\hat \x) = \lift(x(t; \hat \x, \uinput)) \in \mathcal M, \quad \forall \hat \x \in \Xspace.
\end{equation*}
This means that the true Koopman operator maps an $\mathcal M$-valued function to an $\mathcal M$-valued function.
But for data-driven Koopman realizations, the invariance of the manifold $\mathcal M$ is not guaranteed.
For linear Koopman realizations, if predicted lifted states are evaluated at points outside of $\mathcal M$,
we can establish the error bound by exploiting their linear structure~\cite{Zhang+22,MCV22,KTZS25}.
But for bilinear Koopman realizations, this lack of invariance can lead to the evaluation of the learned Koopman realization at points outside of $\mathcal M$, which can result in intractable errors in the prediction.
To preserve this property for $\dict \coloneq \{\lift_\ell\}_{\ell=1}^{\nxlift+1}$, Koopman dictionary invariance~\cite{GP22}, i.e., $\Koopman_t^\uinput \dict \subseteq \dict$, is required for all $t\ge 0, \uinput\in\Uspace$.
If this assumption is satisfied for the true Koopman operator, we can estimate the error bound of the multi-step prediction even when we use the data-driven Koopman realization~\cite{Worthmann+24,BGSW25}. However, this assumption is difficult to satisfy in practice.
In this work, we avoid this issue by projecting the predicted lifted states onto the original state coordinates before the next step of prediction, which ensures that the predicted lifted states are always evaluated at points in $\mathcal M$.

\subsubsection*{Error-Aware Predictor Formulation based on SafEDMD}

Because the proportional error bound in~\cref{lemma:proportional_error_bound} is certified for lifted states generated from original states, we use the original state coordinates contained in the dictionary to define the MPC predictor in the original state space.
When the proportional error bounds~\eqref{eq:error_bound_proportional} hold, based on~\eqref{eq:koopman_lifted} and~\eqref{eq:true_system}, the one-step map of the true system can be written with an unknown $\mathbf{e} \in \Ball(0_{\nx}, 1)$ as
\begin{equation}
  \xnext=
  \Fapprox(\x,\uinput)+( \cx\|\lift(\x)\| + \cu\|\uinput\| )\mathbf{e},
  \label{eq:uncertain_system_nonsmooth}
\end{equation}
where $(\x, \uinput)\in\Zsafe$, $\xnext = \Freal(\x, \uinput)$, $\cx$ and $\cu$ are the proportional error bound constants defined in~\cref{lemma:proportional_error_bound}, and
$\Fapprox$ is defined as
\begin{equation}
  \begin{aligned}
      & \Fapprox(\x, \uinput)                     \\
    = & [ I_{\nx} 0_{\nx \times (\nxlift - \nx)}]
    (A\lift(\x) + B_0 \uinput + \sum_{i=1}^{\nuinput} (B_i - A) \uinput_i \lift(\x)) \label{eq:approx_system}.
  \end{aligned}
\end{equation}
The map $\Fapprox$ first propagates the lifted state by the learned Koopman realization and then projects the result onto the original state coordinates.
This means that the one-step true-system map can be written as the nominal predictor $\Fapprox$ plus an unknown uncertainty $(\cx\|\lift(\x)\| + \cu\|\uinput\|)\mathbf{e}$.
The control-affine structure of the nominal predictor is crucial for the subsequent analysis because it allows us to apply the discrete-time RCCM-based control design method.
Furthermore, due to the SafEDMD formulation and the choice of observables satisfying $\lift(0_{\nx})=0_{\nxlift}$, the approximation-error term vanishes as $\x$ and $\uinput$ approach the target equilibrium. This property is also crucial for the subsequent analysis because it allows us to guarantee convergence of the true state.


\subsubsection*{Differentiable Uncertainty Description}

Next, we introduce a differentiable uncertainty description so that the derivatives required for the discrete-time RCCM-based control design are well-defined.
The error term $( \cx\|\lift(\x)\| + \cu\|\uinput\| )\mathbf{e}$ is not differentiable at points where $\lift(\x) = 0_{\nxlift}$ or $\uinput = 0_{\nuinput}$ because of the norm terms.

To address this issue, we introduce an equivalent uncertainty description as follows.

\begin{lemma}\label{lemma:equivalent_uncertainty}
  Define $B_1 = \{ (\cx\|\lift(\x)\| + \cu\|\uinput\|) \mathbf{e} \mid (\x, \uinput) \in \Zsafe, \mathbf{e} \in \Ball(0_{\nx}, 1)\}$ and $B_2 = \{ \Eapprox(\x,\uinput) \d \mid (\x, \uinput) \in \Zsafe,\ \d \in \D \}$, where $\Eapprox(\x, \uinput) = [ \cx( \lift(\x)^\top \otimes I_{\nx} ) \ \cu (\uinput^\top \otimes I_{\nx}) ]$ and $\D = \{\begin{bmatrix}
      \d_{\lift} \otimes \d_{\x} \\
      \d_{\uinput} \otimes \d_{\x}\end{bmatrix} \mid \d_{\lift} \in \Ball(0_{\nxlift}, 1), \d_{\uinput} \in \Ball(0_{\nuinput}, 1), \d_{\x} \in \Ball(0_{\nx}, 1)\}$.
  Then, it holds that $B_1 = B_2$.
\end{lemma}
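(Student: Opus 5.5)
We establish the two inclusions separately, both resting on the Kronecker identity $(a^\top\otimes I_{\nx})(b\otimes c)=(a^\top b)\,c$ for vectors $a,b$ of equal length and $c\in\setR[\nx]$. For fixed $(\x,\uinput)$ and $\d\in\D$ with components $\d_{\lift},\d_{\uinput},\d_{\x}$, this identity gives
\begin{equation*}
  \Eapprox(\x,\uinput)\d=\bigl(\cx\,\lift(\x)^\top\d_{\lift}+\cu\,\uinput^\top\d_{\uinput}\bigr)\d_{\x}.
\end{equation*}
So every element of $B_2$ is a scalar multiple of a vector in the unit ball, and the whole argument reduces to comparing the attainable scalar coefficients with $\cx\|\lift(\x)\|+\cu\|\uinput\|$ at the same point $(\x,\uinput)\in\Zsafe$.

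For $B_2\subseteq B_1$, fix $(\x,\uinput)\in\Zsafe$ and $\d\in\D$, and set $s=\cx\lift(\x)^\top\d_{\lift}+\cu\uinput^\top\d_{\uinput}$. By Cauchy--Schwarz, and using $\cx,\cu\ge0$ as error-bound constants, we get $|s|\le \cx\|\lift(\x)\|+\cu\|\uinput\|=:r$.

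If $r>0$, define $\mathbf{e}=(s/r)\d_{\x}$. Then $\|\mathbf{e}\|\le1$ and $r\mathbf{e}=s\d_{\x}$.

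If $r=0$, both sides are zero, and we take $\mathbf{e}=0$.

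In either case the element lies in $B_1$ with the same $(\x,\uinput)$.

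For $B_1\subseteq B_2$, fix $(\x,\uinput)$ and $\mathbf{e}$, and set $\d_{\x}=\mathbf{e}$.

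We need $\d_{\lift},\d_{\uinput}$ in the unit balls that attain the Cauchy--Schwarz equality. We choose $\d_{\lift}=\lift(\x)/\|\lift(\x)\|$ when $\lift(\x)\ne0$, and $\d_{\lift}=0$ otherwise; $\d_{\uinput}$ is chosen analogously. Then $\cx\lift(\x)^\top\d_{\lift}=\cx\|\lift(\x)\|$ and $\cu\uinput^\top\d_{\uinput}=\cu\|\uinput\|$, so the identity yields $\Eapprox(\x,\uinput)\d=r\mathbf{e}$. The resulting $\d$ lies in $\D$ by construction.

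There is no serious obstacle. The only delicate points are the degenerate cases $\lift(\x)=0$, $\uinput=0$ or $r=0$, where the normalization is undefined. We handle these by the zero choices above, and we also record nonnegativity of $\cx,\cu$ as the assumption that makes the Cauchy--Schwarz bound match $r$. We note in addition that the equality actually holds pointwise, for each fixed $(\x,\uinput)$, which is stronger than the set equality claimed and is what the later tube construction uses.
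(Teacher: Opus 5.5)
Your proposal is correct and follows essentially the same route as the paper: for $B_1\subseteq B_2$ you take the normalized directions $\lift(\x)/\|\lift(\x)\|$ and $\uinput/\|\uinput\|$ (with an arbitrary or zero choice in the degenerate cases) together with $\d_{\x}=\mathbf{e}$, and for $B_2\subseteq B_1$ you use the Kronecker identity and Cauchy--Schwarz to bound the scalar coefficient by $\cx\|\lift(\x)\|+\cu\|\uinput\|$, exactly as the paper does. Two of your observations go slightly beyond what the paper states explicitly, and both are accurate: the equality holds pointwise for each fixed $(\x,\uinput)$, which is the form later used in the proof of \cref{prop:tube_dynamics}, and $\cx,\cu\ge 0$ is needed for the Cauchy--Schwarz bound to match.
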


\begin{proof}
  First, we show that $B_1 \subseteq B_2$. For any $\y_1 \in B_1$, there exist $(\x, \uinput) \in \Zsafe$ and $\mathbf{e} \in \Ball(0_{\nx}, 1)$ such that $\y_1 = ( \cx\|\lift(\x)\| + \cu\|\uinput\| )\mathbf{e}$. Define $\d_{\lift} = \lift(\x)/\|\lift(\x)\|$ if $\lift(\x)\ne 0_{\nxlift}$ and choose any $\d_{\lift}\in\Ball(0_{\nxlift},1)$ otherwise. Similarly, define $\d_{\uinput} = \uinput/\|\uinput\|$ if $\uinput\ne 0_{\nuinput}$ and choose any $\d_{\uinput}\in\Ball(0_{\nuinput},1)$ otherwise. Let $\d_{\x} = \mathbf{e}$, and $\d = \begin{bmatrix}
      \d_{\lift} \otimes \d_{\x} \\
      \d_{\uinput} \otimes \d_{\x}\end{bmatrix}$. Then, we have $\d \in \D$ and
  \begin{align*}
    \Eapprox(\x, \uinput) \d & = [ \cx( \lift(\x)^\top \otimes I_{\nx} ) \ \cu (\uinput^\top \otimes I_{\nx}) ] \begin{bmatrix}
                                                                                                                  \d_{\lift} \otimes \d_{\x} \\
                                                                                                                  \d_{\uinput} \otimes \d_{\x}\end{bmatrix} \\
                             & = \cx (\lift(\x)^\top \d_{\lift}) \d_{\x} + \cu (\uinput^\top \d_{\uinput}) \d_{\x}                                          \\
                             & = (\cx\|\lift(\x)\| + \cu\|\uinput\|)\mathbf{e} = \y_1.
  \end{align*}
  Thus, $B_1 \subseteq B_2$.
  Next, we show that $B_2 \subseteq B_1$. For any $\y_2 \in B_2$, there exist $(\x, \uinput) \in \Zsafe$ and $\d = \begin{bmatrix}
      \d_{\lift} \otimes \d_{\x} \\
      \d_{\uinput} \otimes \d_{\x}\end{bmatrix} \in \D$ such that $\y_2 = \Eapprox(\x, \uinput) \d$. Then, we have
  \begin{align}
    \|\y_2\| & = \| \cx (\lift(\x)^\top \d_{\lift}) \d_{\x} + \cu (\uinput^\top \d_{\uinput}) \d_{\x} \| \nonumber       \\
             & \le \cx \|\lift(\x)\| \|\d_{\x}\| \|\d_{\lift}\| + \cu \|\uinput\| \|\d_{\x}\| \|\d_{\uinput}\| \nonumber \\
             & \le \cx\|\lift(\x)\| + \cu\|\uinput\|\;(\because \|\d_{\lift}\|, \|\d_{\uinput}\|, \|\d_{\x}\| \le 1).
  \end{align}
  If $\cx\|\lift(\x)\|+\cu\|\uinput\|>0$, this inequality implies $\y_2=(\cx\|\lift(\x)\|+\cu\|\uinput\|)\mathbf{e}$ for some $\mathbf{e}\in\Ball(0_{\nx},1)$; if $\cx\|\lift(\x)\|+\cu\|\uinput\|=0$, then $\y_2=0_{\nx}$ and the same conclusion holds.
  Therefore, $B_2 \subseteq B_1$ holds. Combining $B_1 \subseteq B_2$ and $B_2 \subseteq B_1$, we conclude that $B_1 = B_2$.
\end{proof}
Based on the above lemma, we can express~\eqref{eq:uncertain_system_nonsmooth} as the following differentiable error-aware predictor:

\begin{equation}
  \xnext= \fW(\x,\uinput,\d) =
  \Fapprox(\x,\uinput)+\Eapprox(\x,\uinput)\d,
  \label{eq:uncertain_system}
\end{equation}
where $\Fapprox$ is defined in~\eqref{eq:approx_system}, $\Eapprox(\x, \uinput)$ and $\D$ are defined in~\cref{lemma:equivalent_uncertainty}, and $\d \in \D$ is the unknown uncertainty.

\subsection{Discrete-Time Robust Control Contraction Metric}

We introduce a discrete-time robust control contraction metric for our error-aware predictor.
Control contraction metrics~\cite{MS15} are a powerful tool for nonlinear control design, and their robust extensions~\cite{Zhao+22} have been developed in recent years and also applied to discrete-time settings~\cite{GSS24, ZS24}.
It serves as the main tool for robust control design in this work.
\begin{assumption}\label{assump:drccm}
  There exist a continuously differentiable matrix function $M:\setR[\nx]\to\mathbb S_+^{\nx}$,
  a continuous function $K:\setR[\nx]\to\setR[\nuinput \times \nx]$, and constants
  $0<\rho_c<1$, $\alpha_1>0$, $\alpha_2>0$ such that for all
  $(\x,\uinput)\in\Zsafe$, $\d\in\D$,
  \begin{subequations}\label{eq:drccm_conditions_full}
    \begin{align}
      \Acl(\x,\uinput,\d)^\top M(\xnext)\Acl(\x,\uinput,\d)
                              & \preceq (1-\rho_c)M(\x), \label{eq:drccm_contraction}
      \\
      \alpha_1 I\preceq M(\x) & \preceq \alpha_2 I,
      \label{eq:drccm_conditions}                                                     \\
      M(\xnext)               & \preceq \alpha_2 I,
      \label{eq:drccm_conditions2}
    \end{align}
  \end{subequations}
  where $\Acl(\x,\uinput,\d)=\left.\frac{\partial\fW}{\partial\x}\right|_{(\x,\uinput,\d)}
    +\left.\frac{\partial\fW}{\partial\uinput}\right|_{(\x,\uinput,\d)}K(\x)$ and $\xnext=\fW(\x,\uinput,\d)$.
\end{assumption}

\begin{remark}\label{rem:sos}
  With the change of variables $W=M^{-1}, Y = KW$,
  \cref{eq:drccm_conditions_full} can be transformed into linear matrix
  inequalities and solved numerically~\cite{Zhao+22,WMB21,MS15}.
  In particular, if the Koopman observables are chosen
  as polynomials in~\eqref{eq:lift_function}, then $\fW$ and $\Acl$ become polynomial functions of the state
  and input. By restricting $W$ and $Y$ to polynomial functions, the above
  conditions can be relaxed into a sum-of-squares formulation.
\end{remark}

%
\begin{remark}
  In~\cref{eq:drccm_contraction}, we need the derivatives of the error-aware predictor $\fW$ at points in $\Zsafe$.
  By the problem setup in~\cref{sec:problem_setup}, $\Zsafe$ contains the origin $(0_{\nx},0_{\nuinput})$. Therefore, if we used the nonsmooth uncertainty description in~\cref{eq:uncertain_system_nonsmooth}, the derivatives at the origin would not be well-defined. To avoid this issue, we use the equivalent differentiable error-aware predictor in~\cref{eq:uncertain_system} for control design.
\end{remark}

\subsection{Tube and Feedback Design}

Next, we design the tube and feedback law.
For any $\x,\nox\in\setR[\nx]$, let $\Gamma(\nox,\x)$ be the set of
component-wise continuously differentiable curves $\gamma:[0,1]\to\setR[\nx]$ satisfying
$\gamma(0)=\nox$, $\gamma(1)=\x$.
Given a discrete-time RCCM satisfying~\cref{eq:drccm_conditions_full}, define the associated Riemannian distance $V$ as
\begin{equation}
  \riemannenergyimpl,
\end{equation}
and denote a minimizer by the geodesic $\gamma^*$, which exists and is unique almost everywhere under
\cref{eq:drccm_conditions}~\cite{MS15}.
For any $\x,\nox\in\setR[\nx]$, set
\begin{align}
  \gamma^u(s)          & =\nou+\int_0^s K(\gamma^*(s'))\dot{\gamma}^*(s')\,ds' \\
  \kappa(\x,\nox,\nou) & =\gamma^u(1).
\end{align}

\begin{proposition}\label{prop:tube_dynamics}
  Suppose \cref{assump:drccm} holds and the proportional error bound~\cref{eq:error_bound_proportional} is valid. Then, for any
  $\x,\nox\in\setR[\nx],\nou \in \setR[\nuinput]$
  satisfying
  $(\gamma^*(s),\gamma^u(s))\in\Zsafe$ for all $s\in[0,1]$, it holds that
  \[
    V(\xnext,\noxnext)\le\sqrt{1-\rho_c}\,V(\x,\nox)+\sqrt{\alpha_2}( \cx\|\lift(\nox)\| + \cu\|\nou\| )
  \]
  where
  $\xnext=\Freal(\x,\uinput)$,
  $\noxnext=\Fapprox(\nox,\nou)$,
  $\uinput=\kappa(\x,\nox,\nou)$.
\end{proposition}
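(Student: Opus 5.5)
The plan is to apply the contraction condition \cref{eq:drccm_contraction} along the geodesic joining $\nox$ to $\x$, pushed forward through the error-aware predictor $\fW$ with one fixed uncertainty realization. Any leftover mismatch is then absorbed by a triangle inequality for the Riemannian distance. Throughout I read $V$ as the geodesic distance $\sqrt{\text{energy}}$. For a minimizing geodesic $\gamma^*$, parametrized at constant speed, this equals the length $\int_0^1\|\dot\gamma^*(s)\|_{M(\gamma^*(s))}\,ds$. Consequently, $V(a,b)$ is bounded above by the length of any curve from $b$ to $a$.

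First, I fix the uncertainty. By hypothesis $(\x,\uinput)=(\gamma^*(1),\gamma^u(1))\in\Zsafe$, and the proportional bound~\eqref{eq:error_bound_proportional} is valid. Hence~\eqref{eq:uncertain_system_nonsmooth} and \cref{lemma:equivalent_uncertainty} give some $\d\in\D$ with $\xnext=\Freal(\x,\uinput)=\fW(\x,\uinput,\d)$. Keeping this $\d$ fixed, I define the curve
\[
  c(s)=\fW(\gamma^*(s),\gamma^u(s),\d),\qquad s\in[0,1].
\]
Its endpoints are $c(1)=\xnext$ and $c(0)=\fW(\nox,\nou,\d)=:\tilde{x}^+$. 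Since $\dot\gamma^u(s)=K(\gamma^*(s))\dot\gamma^*(s)$ by construction, the chain rule gives $\dot c(s)=\Acl(\gamma^*(s),\gamma^u(s),\d)\dot\gamma^*(s)$. Because $(\gamma^*(s),\gamma^u(s))\in\Zsafe$ and $\d\in\D$, I can apply \cref{eq:drccm_contraction} pointwise with $\xnext$ replaced by $c(s)$. This yields $\|\dot c(s)\|_{M(c(s))}\le\sqrt{1-\rho_c}\,\|\dot\gamma^*(s)\|_{M(\gamma^*(s))}$. Integrating over $s$ then shows $V(\xnext,\tilde x^+)\le \operatorname{length}(c)\le\sqrt{1-\rho_c}\,V(\x,\nox)$.

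Second, I compare $\tilde x^+$ with the nominal successor $\noxnext=\Fapprox(\nox,\nou)$. Their difference is $\Eapprox(\nox,\nou)\d$, and the norm computation in the proof of \cref{lemma:equivalent_uncertainty} bounds it by $\cx\|\lift(\nox)\|+\cu\|\nou\|$. I take the straight segment from $\noxnext$ to $\tilde x^+$ as a competitor curve. Using the upper bound $M\preceq\alpha_2 I$ from \cref{eq:drccm_conditions,eq:drccm_conditions2}, the length of this segment is at most $\sqrt{\alpha_2}\,\|\Eapprox(\nox,\nou)\d\|$. The triangle inequality for $V$ then gives $V(\xnext,\noxnext)\le V(\xnext,\tilde x^+)+V(\tilde x^+,\noxnext)$, which is the claimed bound.

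The step I expect to be the main obstacle is justifying $M\preceq\alpha_2 I$ along that straight segment. \Cref{eq:drccm_conditions2} only guarantees it at predictor outputs $\fW(\x,\uinput,\d)$ with $(\x,\uinput)\in\Zsafe$ and $\d\in\D$. Every point on the segment is of the form $\Fapprox(\nox,\nou)+\Eapprox(\nox,\nou)(t\d)$ with $t\in[0,1]$. Since $\D$ is closed under the scaling $\d_{\x}\mapsto t\d_{\x}$, each $t\d$ lies in $\D$. So every point is such an output at $(\nox,\nou)=(\gamma^*(0),\gamma^u(0))\in\Zsafe$, and \cref{eq:drccm_conditions2} applies along the whole segment. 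A secondary point to check is that the fixed $\d$ from the true system is admissible at every $(\gamma^*(s),\gamma^u(s))$. This holds because \cref{eq:drccm_contraction} is required for all $\d\in\D$ uniformly, independent of the state–input pair.
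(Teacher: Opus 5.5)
Your proposal is correct and follows the paper's proof essentially step for step. The paper also fixes one $\d\in\D$ with $\xnext=\fW(\x,\uinput,\d)$, pushes the geodesic through $\fW(\cdot,\cdot,\d)$ to get $V(\xnext,\fW(\nox,\nou,\d))\le\sqrt{1-\rho_c}\,V(\x,\nox)$, bounds the remaining gap along the curve $s\mapsto\fW(\nox,\nou,s\d)$ (your straight segment) using $s\d\in\D$ and \cref{eq:drccm_conditions2}, and finishes with the triangle inequality, exactly as you do.
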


\begin{proof}
  There exists $\d\in\D$ such that
  $\xnext=\fW(\x,\uinput,\d)$ by the definition of $\D$.
  Define $\y_+ = \fW(\nox, \nou, \d)$ and $c_1^+(s) = \fW(\gamma^*(s), \gamma^u(s), \d)$.
  Then $c_1^+(0) = \y_+$ and $c_1^+(1) = \xnext$, so $c_1^+ \in \Gamma(\y_+, \xnext)$.
  Moreover, $\dot c_1^+(s) = \Acl(\gamma^*(s), \gamma^u(s),\d)\dot{\gamma}^*(s)$.
  By~\cref{assump:drccm}, we have
  \[\|\dot c_1^+(s)\|_{M(c_1^+(s))} \leq \sqrt{1-\rho_c}\,\|\dot\gamma^*(s)\|_{M(\gamma^*(s))}. \]
  Integrating both sides over $s\in[0,1]$ yields
  \[ V(\xnext,\y_+) \leq \int_0^1 \|\dot c_1^+(s)\|_{M(c_1^+(s))} ds \leq \sqrt{1-\rho_c}\,V(\x,\nox). \]
  Next, define $c_2^+(s)=\fW( \nox, \nou, s\d )$.
  Then $c_2^+(0)=\noxnext$ and $c_2^+(1)=\fW(\nox,\nou,\d)=\y_+$, so $c_2^+\in\Gamma(\noxnext,\y_+)$ and $\dot c_2^+(s)=\Eapprox(\nox,\nou)\d$.
  From \cref{lemma:equivalent_uncertainty}, $\|\Eapprox(\nox,\nou)\d\|\le( \cx\|\lift(\nox)\| + \cu\|\nou\| )$.
  And, for all $s \in [0,1]$,  $s \d \in \D$ by the construction of $\D$ in \cref{lemma:equivalent_uncertainty}, so using \cref{eq:drccm_conditions2}, we have $M(c_2^+(s)) \preceq \alpha_2 I$.
  Therefore, we obtain
  \begin{align*}
    \|\dot c_2^+(s)\|_{M(c_2^+(s))}\leq & \sqrt{\alpha_2}\|\Eapprox(\nox,\nou)\d\|             \\
    \leq                                & \sqrt{\alpha_2}( \cx\|\lift(\nox)\| + \cu\|\nou\| ).
  \end{align*}
  Integrating both sides over $s\in[0,1]$ yields
  \begin{align*}
    V(\y_+,\noxnext) & \leq \int_0^1 \|\dot c_2^+(s)\|_{M(c_2^+(s))} ds          \\
                     & \leq \sqrt{\alpha_2}( \cx\|\lift(\nox)\| + \cu\|\nou\| ).
  \end{align*}
  Therefore,
  \begin{align*}
    V(\xnext,\noxnext) & \leq  V(\xnext,\y_+) + V(\y_+,\noxnext)                                                \\
                       & \leq  \sqrt{1-\rho_c}\,V(\x,\nox)+\sqrt{\alpha_2}( \cx\|\lift(\nox)\| + \cu\|\nou\| ).
  \end{align*}
\end{proof}

\begin{remark}\label{remark:tube_dynamics}
  \Cref{prop:tube_dynamics} implies that the value of $V$ between the next true state $\xnext$ and the next nominal state $\noxnext$ can be bounded by a contraction term $\sqrt{1-\rho_c}\,V(\x,\nox)$ and an error term $\sqrt{\alpha_2}( \cx\|\lift(\nox)\| + \cu\|\nou\| )$.
  The key point is that the error term vanishes as the nominal state and input go to zero, so only the contraction term remains. This property is useful for guaranteeing convergence of the true state.
\end{remark}

The geodesic can be computed numerically via a Chebyshev pseudospectral method with proper discretization~\cite{LM17}.

\subsection{Nominal Constraint Design}

We next tighten the constraints for the nominal predictor.

\begin{proposition}[{\cite[Prop.~5]{SZK23}}]\label{prop:nominal_constraint}
  Under~\cref{assump:drccm}, for any
  $\x,\nox\in\setR[\nx]$ and $\nou \in \setR[\nuinput]$
  satisfying
  \begin{equation}
    \begin{aligned}
       & h_j(\nox,\nou)+c_jV(\x,\nox)\le0,\ \forall j\in\integerset{1,n_h},                                       \\
       & c_j=\max_{(\nox,\nou)\in\Zsafe}\left\|\left(\left.\frac{\partial h_j}{\partial \x}\right|_{(\nox,\nou)}+
      \left.\frac{\partial h_j}{\partial \nou}\right|_{(\nox,\nou)}K(\nox)\right)M(\nox)^{-\frac{1}{2}}\right\|,
    \end{aligned}
  \end{equation}
  it holds that
  \begin{equation}
    (\gamma^*(s),\gamma^u(s))\in\Zsafe, \ s\in[0,1].
  \end{equation}
\end{proposition}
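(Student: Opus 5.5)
The plan is a first-exit (continuity) argument along the geodesic. Along the curve $s\mapsto(\gamma^*(s),\gamma^u(s))$, each constraint function $h_j$ should grow by at most $c_j$ times the Riemannian length already travelled. Since the total length is $V(\x,\nox)$, the tightened condition then keeps every $h_j$ nonpositive on the whole curve. The one difficulty is that $c_j$ bounds the rate of change of $h_j$ only on $\Zsafe$, so we must first know the curve stays in $\Zsafe$, which is what we are trying to prove. A continuity argument resolves this.

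\textbf{Step 1 (derivative of $h_j$).} Fix $j$ and set $\eta_j(s)=h_j(\gamma^*(s),\gamma^u(s))$. By the definition of $\gamma^u$, we have $\dot\gamma^u(s)=K(\gamma^*(s))\dot\gamma^*(s)$ and $\gamma^u(0)=\nou$. The chain rule, valid because $h_j$ is $C^1$ and $\gamma^*$ is $C^1$, gives
\[
\dot\eta_j(s)=\Bigl(\tfrac{\partial h_j}{\partial \x}+\tfrac{\partial h_j}{\partial \uinput}K(\gamma^*(s))\Bigr)\Big|_{(\gamma^*(s),\gamma^u(s))}M(\gamma^*(s))^{-\frac12}\,M(\gamma^*(s))^{\frac12}\dot\gamma^*(s).
\]
Here $M^{\frac12}$ denotes a symmetric square root, or equivalently the Cholesky factor used consistently. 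At any $s$ where $(\gamma^*(s),\gamma^u(s))\in\Zsafe$, the Cauchy--Schwarz inequality and the definition of $c_j$ as a maximum over $\Zsafe$ give
\[
\dot\eta_j(s)\le c_j\|\dot\gamma^*(s)\|_{M(\gamma^*(s))}.
\]
This uses that the maximizing expression is evaluated at a generic point of $\Zsafe$ with the feedback $K$ taken at that point's state component.

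\textbf{Step 2 (first exit).} Let $s^\dagger=\sup\{s\in[0,1]\mid(\gamma^*(s'),\gamma^u(s'))\in\Zsafe\ \forall s'\in[0,s]\}$. At $s=0$ the curve sits at $(\nox,\nou)$, and the hypothesis gives $h_j(\nox,\nou)\le -c_jV(\x,\nox)\le 0$, so the set is nonempty. For every $s\le s^\dagger$, integrating Step 1 yields
\[
\eta_j(s)\le h_j(\nox,\nou)+c_j\int_0^s\|\dot\gamma^*\|_{M}\,ds'\le h_j(\nox,\nou)+c_jV(\x,\nox)\le 0.
\]
The second inequality uses that $\gamma^*$ is a minimizing geodesic, so its length equals $V(\x,\nox)$, and any partial length is at most that.

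\textbf{Step 3 (closing the argument).} Suppose $s^\dagger<1$. The bound from Step 2 holds on $[0,s^\dagger]$ because $\Zsafe$ is closed. To push past $s^\dagger$ we need strict slack. One route is to use the strict tightening $h_j(\nox,\nou)+c_jV<0$, which is available if the statement is first proved with an $\varepsilon$-perturbed constant $c_j+\varepsilon$ and then $\varepsilon\to0$ is taken, or if the max defining $c_j$ is taken over a slightly enlarged compact neighborhood of $\Zsafe$. The other route is to invoke continuity of the integrand and of $h_j$ directly. With strict slack, $\eta_j$ stays negative just beyond $s^\dagger$, which contradicts maximality. Hence $s^\dagger=1$ and $(\gamma^*(s),\gamma^u(s))\in\Zsafe$ for all $s\in[0,1]$.

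I expect this boundary step to be the main obstacle: making the first-exit argument rigorous when the derivative bound is certified only on $\Zsafe$ itself. I would handle it by the $\varepsilon$-enlargement and a limiting argument, or simply by noting, as in \cite[Prop.~5]{SZK23}, that $c_j$ is understood as a bound valid on a neighborhood of $\Zsafe$.
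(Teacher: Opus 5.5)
Your Steps 1 and 2 are correct. Step 3, however, is where the proof has to be finished, and none of the routes you list finishes it for the proposition as stated.

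Proving the claim with $c_j+\varepsilon$ and letting $\varepsilon\to0$ does not work. When $h_j(\nox,\nou)+c_jV(\x,\nox)=0$ with $V>0$, no $\varepsilon>0$ version applies, so there is nothing to pass to the limit in. Maximizing over a neighborhood of $\Zsafe$ changes $c_j$, and hence the statement. Continuity alone cannot create slack that is not there. (The paper itself gives no argument here; it imports the result from \cite{SZK23}.)

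The missing idea is that no slack is needed: tightness at the exit point forces the remaining geodesic to have zero length. Suppose $s^\dagger<1$. There are finitely many constraints and $\Zsafe$ is closed, so there is some $j$ with $\eta_j(s^\dagger)=0$ and $\eta_j(s_n)>0$ along a sequence $s_n\downarrow s^\dagger$. Your Step 2 chain at $s^\dagger$ then becomes
\[
0=\eta_j(s^\dagger)\le h_j(\nox,\nou)+c_j\int_0^{s^\dagger}\|\dot\gamma^*\|_{M(\gamma^*)}\,ds\le h_j(\nox,\nou)+c_jV(\x,\nox)\le0,
\]
so $c_j\int_{s^\dagger}^1\|\dot\gamma^*\|_{M(\gamma^*)}\,ds=0$. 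If $c_j>0$, continuity of the integrand and $M\succ0$ give $\dot\gamma^*\equiv0$ on $[s^\dagger,1]$. Hence $\dot\gamma^u=K(\gamma^*)\dot\gamma^*\equiv0$ there as well. The curve therefore stays at $(\gamma^*(s^\dagger),\gamma^u(s^\dagger))\in\Zsafe$, contradicting $\eta_j(s_n)>0$.

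If $c_j=0$, the chain only yields $h_j(\nox,\nou)=0$. Since the slope of $h_j$ outside $\Zsafe$ is uncontrolled, the statement can then genuinely fail. Here is a counterexample:
\begin{itemize}
\item take $n_x=n_u=1$, $M\equiv1$, $K\equiv0$, which is compatible with \cref{assump:drccm}, e.g., for $\Fapprox(\x,\uinput)=\x/2$ with small $\cx,\cu$;
\item set $h_2=(\x+1)(\x-\tfrac12)(\x-2)(\x-3)$ and $h_{3,4}=\pm\uinput-1$;
\item set $h_1=g(\x)$ with $g\in C^1$, where $g\equiv-1$ on $(-\infty,1]$, $g$ is increasing on $[1,2]$, $g\equiv0$ on $[2,\tfrac52]$, and $g=(\x-\tfrac52)^2$ beyond.
\end{itemize}
Then $\Zsafe=([-1,\tfrac12]\cup[2,\tfrac52])\times[-1,1]$, all $h_j(0,0)<0$, and $c_1=0$ because $g'\equiv0$ on $\Zsafe$. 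The data $\nox=\tfrac52$, $\nou=0$, $\x=\tfrac52+\epsilon$ satisfy every tightened constraint for small $\epsilon>0$. Yet $h_1(\gamma^*(s),\gamma^u(s))=(s\epsilon)^2>0$ for all $s\in(0,1]$.

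So a complete proof needs one of two modifications:
\begin{itemize}
\item Require $c_j>0$ for all $j$; replacing $c_j$ by $\max\{c_j,\varepsilon\}$ is harmless for the rest of the paper. Then your Steps 1--2 plus the equality argument above suffice.
\item Use your neighborhood-based $c_j$. Then a plain first-exit argument from the enlarged set works, because the curve is shown to lie in $\Zsafe$, which sits in the interior of that set.
\end{itemize}
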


Using this proposition, we obtain the following theorem.

\begin{theorem}\label{theorem:tube_feedback}
  Suppose \cref{assump:drccm} holds and the proportional error bound~\cref{eq:error_bound_proportional} is valid. Consider an initial state $\x(0)$ and sequences
  $\nox_k,\nou_k,\delta_k$ satisfying
  \begin{subequations}
    \begin{align}
       & h_j(\nox_k,\nou_k)+c_j\delta_k\le0,
      \quad \forall j\in\integerset{1,n_h},
      \label{eq:nominal_constraint}                                                                    \\
       & \nox_{k+1}=\Fapprox(\nox_k,\nou_k),
      \label{eq:nominal_dynamics}                                                                      \\
       & V(\x(0),\nox_0)\le\delta_0,
      \label{eq:initial_tube}                                                                          \\
       & \delta_{k+1}=\sqrt{1-\rho_c}\delta_k+\sqrt{\alpha_2}( \cx\|\lift(\nox_k)\| + \cu\|\nou_k\| ).
      \label{eq:tube_dynamics}
    \end{align}
  \end{subequations}
  Then, for any $k\in\{0,1,\ldots\}$, the true trajectory $\x(k+1)=\Freal(\x(k),\kappa(\x(k),\nox_k,\nou_k))$ satisfies
  \begin{subequations}
    \begin{align}
      V(\x(k),\nox_k)                     & \le\delta_k,
      \label{eq:theorem_tube}                            \\
      (\x(k),\kappa(\x(k),\nox_k,\nou_k)) & \in\Zsafe.
      \label{eq:theorem_constraint}
    \end{align}
  \end{subequations}
\end{theorem}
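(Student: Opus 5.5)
We argue by induction on $k$, proving \cref{eq:theorem_tube} and \cref{eq:theorem_constraint} together. The inductive hypothesis at step $k$ is $V(\x(k),\nox_k)\le\delta_k$. The base case $k=0$ is exactly \cref{eq:initial_tube}.

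Fix $k$ and assume $V(\x(k),\nox_k)\le\delta_k$. The constraint claim follows from \cref{prop:nominal_constraint}.
\begin{itemize}
\item Since $c_j\ge 0$, the nominal tightening \cref{eq:nominal_constraint} gives $h_j(\nox_k,\nou_k)+c_jV(\x(k),\nox_k)\le h_j(\nox_k,\nou_k)+c_j\delta_k\le 0$ for every $j$.
\item \Cref{prop:nominal_constraint}, applied with $\x=\x(k)$, $\nox=\nox_k$, $\nou=\nou_k$, then yields $(\gamma^*(s),\gamma^u(s))\in\Zsafe$ for all $s\in[0,1]$.
\item At $s=1$ we have $\gamma^*(1)=\x(k)$ and $\gamma^u(1)=\kappa(\x(k),\nox_k,\nou_k)$. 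This is exactly \cref{eq:theorem_constraint} at step $k$.
\end{itemize}

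For the tube claim at step $k+1$, the curve-membership condition just established is precisely the hypothesis of \cref{prop:tube_dynamics}, together with \cref{assump:drccm} and the validity of \cref{eq:error_bound_proportional}. With $\uinput=\kappa(\x(k),\nox_k,\nou_k)$ and the nominal update $\nox_{k+1}=\Fapprox(\nox_k,\nou_k)$ from \cref{eq:nominal_dynamics}, the proposition gives
\[
V(\x(k+1),\nox_{k+1})\le\sqrt{1-\rho_c}\,V(\x(k),\nox_k)+\sqrt{\alpha_2}(\cx\|\lift(\nox_k)\|+\cu\|\nou_k\|).
\]
Because $\sqrt{1-\rho_c}>0$, we may replace $V(\x(k),\nox_k)$ by the larger $\delta_k$. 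By \cref{eq:tube_dynamics}, the right-hand side is then at most $\delta_{k+1}$, which closes the induction.

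The step needing the most care is the order of the two claims. Constraint satisfaction along the geodesic must be established at step $k$ before \cref{prop:tube_dynamics} can be invoked. That proposition needs the curve in $\Zsafe$ both for the RCCM inequalities of \cref{assump:drccm} and for the decomposition \cref{eq:uncertain_system} of the true one-step map, which is certified only on $\Zsafe$.

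A second point to check is the case where the geodesic minimizer is not unique on a measure-zero set. There we fix any minimizing curve $\gamma^*$ in the definition of $\kappa$. Both propositions hold for that fixed choice, so the argument goes through unchanged.
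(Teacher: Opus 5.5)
Your proposal is correct and follows the same induction as the paper's proof. At each step you apply Proposition~\ref{prop:nominal_constraint} to get the geodesic pair in $\Zsafe$, which gives constraint satisfaction at $s=1$, and then apply Proposition~\ref{prop:tube_dynamics} to propagate the tube bound to step $k+1$. You also make explicit two points the paper leaves implicit: that $c_j\ge 0$ is what lets $V(\x(k),\nox_k)\le\delta_k$ feed into Proposition~\ref{prop:nominal_constraint}, and that the geodesic-membership hypothesis of Proposition~\ref{prop:tube_dynamics} at step $k$ comes from that same application.
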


\begin{proof}
  The proof proceeds by induction on $k$.
  For $k=0$, \cref{eq:theorem_tube} follows from \cref{eq:initial_tube}.
  Also, from \cref{eq:nominal_constraint}, \cref{eq:initial_tube}, and
  \cref{prop:nominal_constraint},
  $(\gamma^*(s),\gamma^u(s))\in\Zsafe$ for all $s\in[0,1]$;
  substituting $s=1$ gives \cref{eq:theorem_constraint}.
  Assume the claim holds at $k=\ell$.
  By \cref{eq:theorem_tube}, \cref{eq:tube_dynamics} at $k=\ell$, and
  \cref{prop:tube_dynamics},
  $V(\x(\ell+1),\nox_{\ell+1})
    \le\sqrt{1-\rho_c}\,V(\x(\ell),\nox_\ell)+\sqrt{\alpha_2}( \cx\|\lift(\nox_\ell)\| + \cu\|\nou_\ell\| )
    \le\sqrt{1-\rho_c}\,\delta_\ell +\sqrt{\alpha_2}( \cx\|\lift(\nox_\ell)\| + \cu\|\nou_\ell\| )=\delta_{\ell+1}$.
  Thus, \cref{eq:theorem_tube} holds at $k=\ell+1$.
  Combining this with \cref{eq:nominal_constraint} at $k=\ell + 1$ and
  \cref{prop:nominal_constraint},
  $(\gamma^*(s),\gamma^u(s))\in\Zsafe$ for all $s\in[0,1]$ at $k=\ell+1$;
  substituting $s=1$ gives \cref{eq:theorem_constraint}.
  Hence both statements hold for all $k\ge0$.
\end{proof}

\subsection{TMPC Formulation}

\subsubsection*{TMPC Optimization Problem}
Combining the above ingredients, we present the TMPC formulation.
Given the state $\x(k)$ at time $k$, we solve the following horizon-$\N$ optimization problem.
\begin{problem}\label{prob:tmpc}
  \begin{subequations}\label{eq:mpc}
    \begin{align}
      \underset{\nox_{\cdot|k},\nou_{\cdot|k},\delta_{\cdot|k}}{\minimize} & \sum_{i=0}^{\N-1}\ell(\nox_{i|k},\nou_{i|k}, \delta_{i|k})
      +\ell_f(\nox_{\N|k}, \delta_{\N|k}) \notag                                                                                                                               \\
      \subjectto\quad
                                                                           & \nox_{i+1|k}=\Fapprox(\nox_{i|k},\nou_{i|k}),
      \label{eq:nox}                                                                                                                                                           \\
                                                                           & \delta_{i+1|k}=\sqrt{1-\rho_c}\,\delta_{i|k}                                               \notag \\
                                                                           & \phantom{\delta_{i+1|k}=}+\sqrt{\alpha_2}( \cx\|\lift(\nox_{i|k})\| + \cu\|\nou_{i|k}\| ),
      \label{eq:delta}                                                                                                                                                         \\
                                                                           & h_j(\nox_{i|k},\nou_{i|k})+c_j\delta_{i|k}\le0,
      \label{eq:constraint}                                                                                                                                                    \\
                                                                           & V(\x(k), \nox_{0|k})=\delta_{0|k},
      \label{eq:tubeinit}                                                                                                                                                      \\
                                                                           & (\nox_{\N|k},\delta_{\N|k})\in\liXf,
      \label{eq:terminal}                                                                                                                                                      \\
                                                                           & \forall i\in\integerset{0,\N-1},\ \forall j\in\integerset{1,n_h} \notag
    \end{align}
  \end{subequations}
\end{problem}
with
\begin{equation}\label{eq:cost}
  \ell(\nox_{i|k},\nou_{i|k},\delta_{i|k})=
  \|\nox_{i|k}\|_Q^2+\|\nou_{i|k}\|_R^2 + \lambda\delta_{i|k}^2,
\end{equation}
where $Q\in \mathbb S_+^{\nx}$ and $R\in \mathbb S_+^{\nuinput}$ are weighting
matrices, and $\lambda\in\setR_{>0}$ is a weighting scalar for the tube radius. Unlike standard TMPC formulations~\cite{LSH19, Gonzalez+11, SZK23}, we introduce the tube-radius cost $\lambda\delta_{i|k}^2$, which is useful for guaranteeing convergence of the true state.
The decision variables are the nominal state and input trajectories
$\nox_{i|k},\nou_{i|k}$ and the tube radius $\delta_{i|k}$. The nominal state $\nox_{i|k}$ satisfies the nominal predictor $\Fapprox$ in~\cref{eq:approx_system} (cf.~\cref{eq:nox}).
The tube radius $\delta_{i|k}$ evolves according to \cref{prop:tube_dynamics} (cf.~\cref{eq:delta}, \cref{eq:tubeinit}). The constraints on the nominal state and input in \cref{eq:constraint} are designed based on \cref{theorem:tube_feedback} to ensure that the sampled true state-input pair satisfies the original nonlinear constraints at each sampling instant.
The terminal constraint \cref{eq:terminal} and the terminal cost $\ell_f$ are introduced to guarantee recursive feasibility and convergence, which we will discuss in \cref{sec:terminal_set}.
If $\{\nox_{i|k}^*,\nou_{i|k}^*,\delta_{i|k}^*\mid i\in\integerset{0,\N}\}$
is the solution at time $k$, the applied input is
\begin{equation}\label{eq:mpc_control_law}
  \uinput(k)=\kappa(\x(k),\nox_{0|k}^*,\nou_{0|k}^*).
\end{equation}

\subsubsection*{Offline and Online Stages}
The proposed controller is implemented in two stages.
First, in the offline stage, the error-aware predictor based on the SafEDMD and all ingredients required for~\cref{prob:tmpc} are constructed from the collected data.
The resulting offline procedure is summarized in~\cref{algorithm:mpc_offline}.

\begin{algorithm}[H]
  \caption{Learning and TMPC Offline Design}
  \label{algorithm:mpc_offline}
  \begin{algorithmic}[1]
    \REQUIRE \dataset = $\{\x_j^{\tilu}, \y_j^{\tilu}\}_{j=1}^\K$ with $\y_j^{\tilu} = \Freal(\x_j^{\tilu}, \tilu)$ for $\tilu \in \{0_{\nuinput}, e_1, \ldots, e_{\nuinput}\}$ and lifting function $\lift$
    \STATE Learn coefficient matrices $A, B_i$ using~\cref{eq:koopman_optimization} and determine $\cx, \cu$ using~\cref{eq:error_bound_proportional} to construct the error-aware predictor $\fW$ in~\cref{eq:uncertain_system}
    \STATE Compute $M(\x),K(\x), \alpha_1, \alpha_2$ and $\rho_c$ (\cref{assump:drccm})
    \STATE Compute $c_j$ (\cref{prop:nominal_constraint})
    \STATE Design $\liXf$, $k_f$, and $\ell_f$ (\cref{ass:terminalset,prop:terminalset})
  \end{algorithmic}
\end{algorithm}

In the online stage, we measure the current state $\x(k)$ and solve~\cref{prob:tmpc}.
A key property of the proposed TMPC formulation is that a feasible solution can be constructed by shifting the previous solution, as shown in the proof of~\cref{thm:tmpc}.

Then, we run the numerical solver for at most the prescribed computation time constraint $\tau$. If we find a solution with a smaller objective value, we update the current best feasible solution.
If we find no improved feasible solution, we use the shifted feasible solution for control.
Thus, we use the online optimization to improve the available feasible solution, while feasibility does not rely on the optimality of~\cref{prob:tmpc} within $\tau$.
Furthermore, if
\begin{equation}\label{eq:local_solution}
  \begin{aligned}
                            & \nox_{i|k}^\text{loc} = 0_{\nx},\, \nou_{i|k}^\text{loc} = 0_{\nuinput}, \\
    \delta_{i|k}^\text{loc} & = ( \sqrt{1-\rho_c} )^iV(\x(k), 0_{\nx}),\, i \in \integerset{0,\N}
  \end{aligned}
\end{equation}
is a feasible solution of~\cref{prob:tmpc} and the cost for this solution is smaller than the current best feasible solution, we update the current best feasible solution with this local solution.
This local solution is required to guarantee exponential stability of the closed-loop system, which we will discuss in~\cref{thm:es}.
The resulting online procedure is summarized in~\cref{algorithm:mpc_online}.

\begin{algorithm}[H]
  \caption{TMPC Online Control}
  \label{algorithm:mpc_online}
  \begin{algorithmic}[1]
    \REQUIRE Weights $Q,R,\lambda$, horizon $\N$, and time constraint $\tau$
    \FOR{each time step $k\ge0$}
    \IF{$k=0$}
    \STATE Set the initial feasible solution as the candidate
    \ELSE
    \STATE Set the shifted previous solution as the candidate
    \ENDIF
    \STATE Run the solver for~\cref{prob:tmpc} within at most $\tau$
    \IF {a feasible solution with smaller cost is found}
    \STATE Update the candidate solution
    \ENDIF
    \IF {\cref{eq:local_solution} is feasible and has smaller cost}
    \STATE Update the candidate solution
    \ENDIF
    \STATE Apply \cref{eq:mpc_control_law}
    \ENDFOR
  \end{algorithmic}
\end{algorithm}

\subsection{Terminal Set Constraint}\label{sec:terminal_set}

To guarantee recursive feasibility and convergence of the nominal trajectories, we introduce a terminal set.

\begin{assumption}\label{ass:terminalset}
  There exist a terminal set
  $\liXf\subseteq\setR[\nx]\times\setR_{\ge0}$,
  a terminal controller $k_f:\setR[\nx]\to\setR[\nuinput]$, and a terminal cost
  $\ell_f:\setR[\nx]\times\setR_{\ge0}\to\setR_{\ge0}$ such that for any
  $(\nox,\delta)\in\liXf$ and for all $j \in \integerset{1,n_h}$,
  \begin{subequations}
    \begin{align}
       & (\noxnext,\deltanext)\in\liXf,
      \label{eq:terminal_set}                                                                                                                               \\
       & h_j(\nox,k_f(\nox))+c_j\delta\le0,
      \ h_j(\noxnext,k_f(\noxnext))+c_j\deltanext\le0,
      \label{eq:terminal_constraint}                                                                                                                        \\
       & \ell(\nox,k_f(\nox),\delta)\le\ell_f(\nox, \delta)-\ell_f(\noxnext, \deltanext),
      \label{eq:terminal_cost}                                                                                                                              \\
       & (\nox, \delta)\in\liXf, \hat \delta \in [0, \delta] \Rightarrow (\nox, \hat \delta)\in\liXf,\, \ell_f(\nox, \hat \delta) \leq \ell_f(\nox, \delta)
      \label{eq:terminal_cost_monotone}
    \end{align}
  \end{subequations}
  hold, where $\noxnext=\Fapprox(\nox,k_f(\nox))$ and
  $\deltanext=\sqrt{1-\rho_c}\delta+\sqrt{\alpha_2}( \cx\|\lift(\nox)\| + \cu\|k_f(\nox)\| )$.

\end{assumption}

The following proposition shows that the terminal ingredients can be explicitly constructed by using the fact that the error term vanishes and only the contraction term remains as the nominal state and input go to zero, as discussed in~\cref{remark:tube_dynamics}.

\begin{proposition}\label{prop:terminalset}
  \cref{ass:terminalset} holds with
  \begin{subequations}
    \begin{align}
      \liXf=\{ & (\nox,\delta)\in\setR[\nx]\times\setR_{\ge0}\mid \notag                                               \\
               & \nox=0_{\nx}, \label{eq:terminal_nox_prop}                                                            \\
               & h_j(\nox,k_f(\nox))+c_j\delta\le0,\quad \forall j\in\integerset{1,n_h} \label{eq:terminal_delta_prop}
      \}
    \end{align}
  \end{subequations}
  $k_f(\nox)\equiv0_{\nuinput}$ and $\ell_f(\nox, \delta) = \frac{\lambda}{\rho_c}\delta^2$.
\end{proposition}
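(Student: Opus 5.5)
The plan is to verify the four conditions \cref{eq:terminal_set}--\cref{eq:terminal_cost_monotone} of \cref{ass:terminalset} directly. First I would record the one-step evolution of the terminal ingredients. Take $(\nox,\delta)\in\liXf$; then $\nox=0_{\nx}$ and $k_f(\nox)=0_{\nuinput}$. Two facts from the setup give the nominal successor. The dictionary satisfies $\lift(0_{\nx})=0_{\nxlift}$, since the coordinates $\x$ and the $\phi_\ell$ all vanish at the origin. Moreover, \cref{eq:approx_system} is linear in $\lift(\x)$ plus $B_0\uinput$ plus bilinear terms in $\uinput_i\lift(\x)$. Hence $\noxnext=\Fapprox(0_{\nx},0_{\nuinput})=0_{\nx}$. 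The error term $\cx\|\lift(0_{\nx})\|+\cu\|0_{\nuinput}\|$ vanishes, so $\deltanext=\sqrt{1-\rho_c}\,\delta\le\delta$. This is exactly the mechanism described in \cref{remark:tube_dynamics}.

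Next I would check invariance and constraints, namely \cref{eq:terminal_set} and \cref{eq:terminal_constraint}. The first inequality in \cref{eq:terminal_constraint} is \cref{eq:terminal_delta_prop} itself. For the successor we have $\noxnext=0_{\nx}$ and $k_f(\noxnext)=0_{\nuinput}$, so we need $h_j(0_{\nx},0_{\nuinput})+c_j\deltanext\le0$. This follows from $h_j(0_{\nx},0_{\nuinput})+c_j\delta\le 0$ together with $c_j\ge0$ (it is a maximum of norms) and $0\le\deltanext\le\delta$. Since $\deltanext\ge0$ and $\noxnext=0_{\nx}$, this also gives $(\noxnext,\deltanext)\in\liXf$. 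The monotonicity property \cref{eq:terminal_cost_monotone} follows by the same argument with $\hat\delta\in[0,\delta]$ in place of $\deltanext$. It also uses that $\ell_f(0_{\nx},\cdot)=\frac{\lambda}{\rho_c}(\cdot)^2$ is nondecreasing on $\setR_{\ge0}$.

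The only computation is the cost decrease \cref{eq:terminal_cost}. With $\nox=0_{\nx}$ and $k_f=0_{\nuinput}$, the stage cost \cref{eq:cost} reduces to $\lambda\delta^2$. The right-hand side is
\[
\frac{\lambda}{\rho_c}\delta^2-\frac{\lambda}{\rho_c}(1-\rho_c)\delta^2=\lambda\delta^2,
\]
so \cref{eq:terminal_cost} holds with equality. This is precisely why the factor $1/\rho_c$ appears in $\ell_f$: it is the geometric sum $\sum_{i\ge0}(1-\rho_c)^i$ of the squared contraction factors.

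I expect no real obstacle. The one point requiring care is the claim $\Fapprox(0_{\nx},0_{\nuinput})=0_{\nx}$ and the vanishing of the error term. Both rely on $\lift(0_{\nx})=0_{\nxlift}$, which uses the normalization $\phi_\ell(0_{\nx})=0$ and the exclusion of the constant observable from $\lift$. I would state this explicitly at the start of the proof.
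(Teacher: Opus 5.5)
Your proposal is correct and follows the paper's proof essentially step for step. You use $\lift(0_{\nx})=0_{\nxlift}$ to get $\noxnext=0_{\nx}$ and $\deltanext=\sqrt{1-\rho_c}\,\delta\le\delta$, derive invariance, the constraint conditions, and monotonicity from $c_j\ge0$, and verify the cost decrease through $\frac{\lambda}{\rho_c}\delta^2-\frac{\lambda}{\rho_c}(1-\rho_c)\delta^2=\lambda\delta^2$, with your remarks on $\deltanext\ge0$ and the monotonicity of $\ell_f$ in $\delta$ spelling out points the paper leaves implicit.
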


\begin{proof}
  Let $(\nox,\delta)\in\liXf$.
  From \cref{eq:terminal_nox_prop} and $k_f(\nox)\equiv 0_{\nuinput}$,
  $\noxnext=\Fapprox(\nox,k_f(\nox))=\Fapprox(0_{\nx}, 0_{\nuinput}) = 0_{\nx}$.
  Hence, \cref{eq:terminal_nox_prop} is preserved at the next time step.
  Moreover, since $\lift(0_{\nx})=0_{\nxlift}$, it holds that
  \begin{equation}
    \deltanext=\sqrt{1-\rho_c} \delta+ \sqrt{\alpha_2}( \cx\|\lift(\nox)\| + \cu\|k_f(\nox)\| )\le\sqrt{1-\rho_c}\delta, \label{eq:terminal_delta_prop_next}
  \end{equation}
  so \cref{eq:terminal_delta_prop} is preserved at the next time step.
  Hence, $(\noxnext,\deltanext)\in\liXf$, and \cref{eq:terminal_set} holds.
  Furthermore, \cref{eq:terminal_delta_prop} and \cref{eq:terminal_set} imply that \cref{eq:terminal_constraint} holds.
  Since \cref{eq:terminal_delta_prop_next} holds, we have
  \begin{align*}
    \ell_f(\nox, \delta) - \ell_f(\noxnext,\deltanext)\geq & \frac{\lambda}{\rho_c}\delta^2 - \frac{\lambda}{\rho_c}(\sqrt{1-\rho_c}\delta)^2 \\
    =                                                      & \lambda \delta^2 = \ell(\nox, k_f(\nox), \delta),
  \end{align*}
  so \cref{eq:terminal_cost} holds.
  Finally, for all $\hat \delta\in[0,\delta]$,
  \[ h_j(\nox,k_f(\nox))+c_j\hat \delta\le h_j(\nox,k_f(\nox))+c_j\delta\le0, \]
  so $(\nox, \hat \delta)\in\liXf$ holds and $\ell_f(\nox, \hat \delta) \leq \ell_f(\nox, \delta)$ holds, and \cref{eq:terminal_cost_monotone} holds.
\end{proof}
\begin{remark}
  The terminal set in \cref{prop:terminalset} is nonempty. Indeed,
  since $h_j(0_{\nx},0_{\nuinput}) < 0\, \forall j \in \integerset{1, n_h}$, there exists some $\bar\delta>0$ such that
  $h_j(0_{\nx},0_{\nuinput})+c_j\delta\le0$ holds for all $\delta\in[0,\bar\delta]$ and all $j\in\integerset{1,n_h}$ since $c_j\ge0$ for all $j\in\integerset{1,n_h}$.
  Hence, at least one $\delta\in\setR_{\ge0}$ satisfying \cref{eq:terminal_delta_prop} exists, and therefore $\liXf\ne\emptyset$.
\end{remark}


\subsection{Theoretical Analysis for Online Control}

Finally, we prove recursive feasibility and exponential stability for this TMPC framework.

\subsubsection*{Recursive Feasibility and Nominal Trajectory Convergence}
\begin{theorem}\label{thm:tmpc}
  Suppose \cref{assump:drccm} and \cref{ass:terminalset} hold and the proportional error bound~\cref{eq:error_bound_proportional} is valid. Let $\{\nox_{i|k}^*,\nou_{i|k}^*,\delta_{i|k}^*\mid i\in\integerset{0,\N}\}$ be the solution returned at time $k$ by~\cref{algorithm:mpc_online}.
  If~\cref{prob:tmpc} is feasible at time $k=0$, then~\cref{prob:tmpc} is feasible for all $k\ge0$. Moreover,
  $\lim_{k\to\infty}\nox^*_{0|k}=0_{\nx}$,
  $\lim_{k\to\infty}\nou^*_{0|k}=0_{\nuinput}$ and
  $\lim_{k\to\infty}\delta^*_{0|k}=0$ hold.
\end{theorem}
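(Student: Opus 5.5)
The plan is to use the standard shifted-candidate argument, adapted to the tube variable $\delta$ and to the equality constraint \cref{eq:tubeinit}.

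\textbf{Feasibility.} Let $\{\nox_{i|k}^*,\nou_{i|k}^*,\delta_{i|k}^*\}$ be the solution returned at time $k$. At time $k+1$ I take the candidate
\[
\nox_{i|k+1}=\nox_{i+1|k}^*,\qquad \nou_{i|k+1}=\nou_{i+1|k}^*\quad (i\in\integerset{0,\N-2}),
\]
and append $\nou_{\N-1|k+1}=k_f(\nox_{\N|k}^*)$, $\nox_{\N|k+1}=\Fapprox(\nox_{\N|k}^*,k_f(\nox_{\N|k}^*))$.

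The delicate point is the initial tube constraint \cref{eq:tubeinit}, which is an equality. By \cref{prop:tube_dynamics}, whose curve condition is supplied by \cref{prop:nominal_constraint} and \cref{eq:constraint} at $i=0$, we only get
\[
\hat\delta:=V(\x(k+1),\nox_{1|k}^*)\le \delta_{1|k}^*.
\]
So I set $\delta_{0|k+1}=\hat\delta$ and propagate \cref{eq:delta} with the shifted nominal sequence. Since \cref{eq:delta} is affine in $\delta$ with coefficient $\sqrt{1-\rho_c}>0$, induction gives
\[
\delta_{i|k+1}=\delta_{i+1|k}^*-(\sqrt{1-\rho_c})^{i}\,(\delta_{1|k}^*-\hat\delta)\le\delta_{i+1|k}^*.
\]
Hence \cref{eq:constraint} holds for $i\le \N-2$ because $c_j\ge0$.

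At the end of the horizon, $\delta_{\N-1|k+1}\le\delta_{\N|k}^*$ and $(\nox_{\N|k}^*,\delta_{\N|k}^*)\in\liXf$. By \cref{eq:terminal_cost_monotone}, $(\nox_{\N|k}^*,\delta_{\N-1|k+1})\in\liXf$. Then \cref{eq:terminal_constraint} gives \cref{eq:constraint} at $i=\N-1$, and \cref{eq:terminal_set} gives \cref{eq:terminal}, with $\delta_{\N|k+1}$ exactly the $\deltanext$ of \cref{ass:terminalset}. This proves recursive feasibility by induction from $k=0$. The algorithm only ever replaces the candidate by one of lower cost, so the returned solution is feasible.

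\textbf{Convergence.} Let $J_k$ be the cost of the returned solution at time $k$, and $J_{k+1}^c$ the cost of the candidate. Then $J_{k+1}\le J_{k+1}^c$. I will show
\[
J_{k+1}^c\le J_k-\ell(\nox_{0|k}^*,\nou_{0|k}^*,\delta_{0|k}^*).
\]
For the running terms with $i\le\N-2$, the state and input terms match the shifted ones and $\delta_{i|k+1}^2\le(\delta_{i+1|k}^*)^2$. For the last stage term plus terminal cost, let $\tilde\delta:=\delta_{\N-1|k+1}\le\delta^*_{\N|k}$. Applying \cref{eq:terminal_cost} at $(\nox_{\N|k}^*,\tilde\delta)$ gives
\[
\ell(\nox_{\N|k}^*,k_f,\tilde\delta)+\ell_f(\nox_{\N|k+1},\delta_{\N|k+1})\le\ell_f(\nox_{\N|k}^*,\tilde\delta)\le \ell_f(\nox_{\N|k}^*,\delta_{\N|k}^*),
\]
where the last step is the monotonicity \cref{eq:terminal_cost_monotone}.

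Summing, $J_k$ is nonincreasing and bounded below by $0$. Telescoping then shows $\sum_k\ell(\nox_{0|k}^*,\nou_{0|k}^*,\delta_{0|k}^*)<\infty$. Since $Q,R\succ0$ and $\lambda>0$, the stage cost dominates $\|\nox\|^2$, $\|\nou\|^2$ and $\delta^2$ up to constants. This yields the three limits.

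\textbf{Main obstacle.} The step I expect to need the most care is the mismatch caused by the equality \cref{eq:tubeinit}. The shifted $\delta$-sequence must be recomputed rather than copied, and its componentwise domination by the previous one must be verified before the tightened and terminal constraints can be transferred. This is exactly where the monotonicity condition \cref{eq:terminal_cost_monotone} is needed.
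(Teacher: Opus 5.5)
Your proposal is correct and takes the same route as the paper. It uses the same shifted candidate, with re-initialized radius $\delta_{0|k+1}=V(\x(k+1),\nox^*_{1|k})$ and appended input $k_f(\nox^*_{\N|k})$, the same inductive domination $\delta_{i|k+1}\le\delta^*_{i+1|k}$, and the same cost-decrease and telescoping argument. The difference is only cosmetic: you apply \cref{ass:terminalset} at the reduced radius $\delta_{\N-1|k+1}$ and invoke \cref{eq:terminal_cost_monotone} explicitly, whereas the paper applies it at $(\nox^*_{\N|k},\delta^*_{\N|k})$, compares against the extended tuple $(\nox^*_{\N+1|k},\delta^*_{\N+1|k})$, and uses the monotonicity condition only implicitly.
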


\begin{proof}
  The proof proceeds by induction on time $k$.
  Assume~\cref{prob:tmpc} is feasible at time $k$.
  For simplicity, we define $\nou^*_{\N|k}=k_f(\nox^*_{\N|k}),\nox^*_{\N+1|k}=\Fapprox(\nox^*_{\N|k},\nou^*_{\N|k})$, and $\delta^*_{\N+1|k}=\sqrt{1-\rho_c}\,\delta^*_{\N|k}+\sqrt{\alpha_2}( \cx\|\lift(\nox^*_{\N|k})\| + \cu\|\nou^*_{\N|k}\| )$.
  Construct a candidate at time $k+1$ by
  \begin{align*}
     & \nox_{0|k+1}=\nox^*_{1|k},\ \delta_{0|k+1}=V(\x(k+1),\nox_{0|k+1}), \\
     & \nou_{i|k+1}=\nou^*_{i+1|k},\ i\in\integerset{0,\N-1}
  \end{align*}
  and propagate the remaining terms by \cref{eq:nox,eq:delta}.
  Constraints \cref{eq:nox,eq:delta,eq:tubeinit} at time $k+1$ are immediate, and $\nox_{i|k+1} = \nox^*_{i+1|k}$ for all $i\in\integerset{0,\N}$ holds by construction.
  Next, we show that
  \begin{equation}\label{eq:delta_ineq}
    \delta_{i|k+1}\le\delta_{i+1|k}^*,\quad i\in\integerset{0,\N},
  \end{equation}
  by induction on $i$.
  From \cref{eq:delta,eq:tubeinit} at time $k$ and \cref{prop:tube_dynamics},
  $\delta_{0|k+1}=V(\x(k+1),\nox_{0|k+1})=V(\x(k+1),\nox^*_{1|k})\le\delta_{1|k}^*$, so $\delta_{0|k+1}\le\delta_{1|k}^*$ holds.
  Assume $\delta_{i|k+1}\le\delta_{i+1|k}^*$ for some $i\in\integerset{0,\N-1}$.
  From \cref{eq:delta} at time $k+1$,
  \begin{align*}
         & \delta_{i+1|k+1}                                                                                          \\
    =    & \sqrt{1-\rho_c}\,\delta_{i|k+1}+\sqrt{\alpha_2}( \cx\|\lift(\nox_{i|k+1})\| + \cu\|\nou_{i|k+1}\| )       \\
    =    & \sqrt{1-\rho_c}\,\delta_{i|k+1}+\sqrt{\alpha_2}( \cx\|\lift(\nox^*_{i+1|k})\| + \cu\|\nou^*_{i+1|k}\| )   \\
    \leq & \sqrt{1-\rho_c}\,\delta_{i+1|k}^*+\sqrt{\alpha_2}( \cx\|\lift(\nox^*_{i+1|k})\| + \cu\|\nou^*_{i+1|k}\| ) \\
    =    & \delta_{i+2|k}^*,
  \end{align*}
  so $\delta_{i+1|k+1}\le\delta_{i+2|k}^*$ holds. Thus, $\delta_{i|k+1}\le\delta_{i+1|k}^*$ for all $i\in\integerset{0,\N}$.
  Hence,
  \begin{align*}
        & h_j(\nox_{i|k+1},\nou_{i|k+1})+c_j\delta_{i|k+1}             \\
    \le & h_j(\nox^*_{i+1|k},\nou^*_{i+1|k})+c_j\delta^*_{i+1|k} \le 0
  \end{align*}
  holds for all $i \in \integerset{0,\N-2}$ and for all $j \in \integerset{1,n_h}$ by \cref{eq:constraint} at time $k$.
  For $i=\N-1$,~\eqref{eq:terminal} at time $k$ and \cref{ass:terminalset} imply
  \begin{align*}
        & h_j(\nox_{\N-1|k+1},\nou_{\N-1|k+1})+c_j\delta_{\N-1|k+1}  \\
    \le & h_j(\nox^*_{\N|k},\nou^*_{\N|k})+c_j\delta^*_{\N|k}  \leq0
  \end{align*}
  for all $j \in \integerset{1,n_h}$, so~\eqref{eq:constraint} holds at time $k+1$.
  And \cref{eq:terminal} at time $k+1$ follows from~\eqref{eq:terminal} at time $k$ and~\cref{ass:terminalset}.
  Thus, feasibility is recursive.

  Next, we prove convergence of nominal trajectories. Let $\totalcost_k$ be the value of the objective in~\cref{prob:tmpc} for $\{\nox_{i|k}^*,\nou_{i|k}^*,\delta_{i|k}^*\mid i\in\integerset{0,\N}\}$.
  It holds that
  \begin{equation}\label{eq:V_decrease}
    \begin{aligned}
           & \totalcost_{k+1}                                                                                                      \\
      \leq & \sum_{i=0}^{\N-1}\ell(\nox_{i|k+1},\nou_{i|k+1}, \delta_{i|k+1}) +\ell_f(\nox_{\N|k+1}, \delta_{\N|k+1})              \\
      \leq & \sum_{i=0}^{\N-1}\ell(\nox^*_{i+1|k},\nou^*_{i+1|k}, \delta^*_{i+1|k}) +\ell_f(\nox^*_{\N+1|k}, \delta^*_{\N+1|k})    \\
      =    & \totalcost_{k} - \ell(\nox^*_{0|k},\nou^*_{0|k}, \delta^*_{0|k}) + \ell(\nox^*_{\N|k},\nou^*_{\N|k}, \delta^*_{\N|k}) \\
           & \phantom{=\totalcost_{k}}  + \ell_f(\nox^*_{\N+1|k},\delta^*_{\N+1|k}) - \ell_f(\nox^*_{\N|k},\delta^*_{\N|k})        \\
      \leq & \totalcost_{k} - \ell(\nox^*_{0|k},\nou^*_{0|k}, \delta^*_{0|k}),
    \end{aligned}
  \end{equation}
  where the last inequality uses~\cref{eq:terminal_cost}.
  Summing the inequality~\cref{eq:V_decrease} over $k$ shows $\sum_{k=0}^\infty\ell(\nox_{0|k}^*,\nou_{0|k}^*, \delta^*_{0|k}) <\infty$.
  Therefore,
  $\underset{k\to\infty}{\lim}\ell(\nox_{0|k}^*,\nou_{0|k}^*, \delta^*_{0|k})=0$ holds.
  Because $\delta^*_{0|k} \geq 0$ holds and $\ell$ is defined as in~\eqref{eq:cost} with $Q\in \mathbb S_+^{\nx}$, $R\in \mathbb S_+^{\nuinput}$, and $\lambda \in \setR_{>0}$, it follows that $\nox_{0|k}^* \to 0_{\nx}$, $\nou_{0|k}^* \to 0_{\nuinput}$ and $\delta^*_{0|k} \to 0$ as $k\to\infty$.
\end{proof}

The construction of~\cref{prob:tmpc} and the statement of \cref{thm:tmpc} lead to the following corollary.

\begin{corollary}\label{cor:state_convergence}
  Under the conditions of \cref{thm:tmpc},
  if \cref{prob:tmpc} is feasible at time $k=0$,
  then the resulting closed-loop trajectory $\x(k)$ satisfies
  \begin{equation}\label{eq:state_convergence}
    \lim_{k\to\infty}\|\x(k)\| = 0.
  \end{equation}
\end{corollary}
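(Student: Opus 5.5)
We will combine the convergence of the nominal quantities in \cref{thm:tmpc} with the tube bound from the tube initialization constraint \cref{eq:tubeinit}. The key observation is that the true state is always within Riemannian distance $\delta^*_{0|k}$ of the first nominal state $\nox^*_{0|k}$. Both of these quantities tend to zero.

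\textbf{Step 1: the tube bound.} For every $k$, the solution returned by \cref{algorithm:mpc_online} is feasible for \cref{prob:tmpc}. This holds because of recursive feasibility in \cref{thm:tmpc}, and also for the local candidate \cref{eq:local_solution}, which is only accepted when it is feasible. Hence \cref{eq:tubeinit} gives $V(\x(k),\nox^*_{0|k})=\delta^*_{0|k}$.

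\textbf{Step 2: from Riemannian distance to Euclidean norm.} By \cref{eq:drccm_conditions}, $M(\x)\succeq\alpha_1 I$. Take any curve $\gamma\in\Gamma(\nox,\x)$. Then
\[
\int_0^1\|\dot\gamma(s)\|_{M(\gamma(s))}\,ds\ge\sqrt{\alpha_1}\int_0^1\|\dot\gamma(s)\|\,ds\ge\sqrt{\alpha_1}\|\x-\nox\|.
\]
Taking the infimum over curves, in the form in which $V$ is defined (length or energy; for energy we use Cauchy--Schwarz and get a square), yields $\|\x-\nox\|\le V(\x,\nox)/\sqrt{\alpha_1}$. Combining this with Step 1 gives
\[
\|\x(k)\|\le\|\nox^*_{0|k}\|+\tfrac{1}{\sqrt{\alpha_1}}\,\delta^*_{0|k}.
\]

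\textbf{Step 3: conclusion.} By \cref{thm:tmpc}, $\nox^*_{0|k}\to0_{\nx}$ and $\delta^*_{0|k}\to0$. The right-hand side of the bound therefore vanishes, which gives \cref{eq:state_convergence}.

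\textbf{Main obstacle.} The only delicate point is Step 2. The comparison between $V$ and the Euclidean distance must be applied with the exact normalization of $V$ used in the paper. If $V$ is the Riemannian energy rather than the length, we take a square root and obtain $\|\x-\nox\|\le\sqrt{V/\alpha_1}$, which still tends to zero. We must also check that the cost decrease argument in \cref{thm:tmpc} is not broken when the local solution replaces the shifted candidate. It is not: the local solution is only accepted when its cost is smaller, so $\totalcost_{k+1}$ stays below the shifted candidate's cost.
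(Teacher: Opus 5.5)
Your proposal is correct and is essentially the paper's proof: it combines $V(\x(k),\nox^*_{0|k})=\delta^*_{0|k}$ from \cref{eq:tubeinit} with $V(\x(k),\nox^*_{0|k})\ge\sqrt{\alpha_1}\|\x(k)-\nox^*_{0|k}\|$ from \cref{eq:drccm_conditions}, applies the triangle inequality, and uses $\nox^*_{0|k}\to 0_{\nx}$ and $\delta^*_{0|k}\to 0$ from \cref{thm:tmpc}. Your energy-versus-length hedge is unnecessary, because the proof of \cref{prop:tube_dynamics} bounds $V$ by $\int_0^1\|\dot c(s)\|_{M(c(s))}\,ds$, which shows that $V$ is the Riemannian length.
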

\begin{proof}
  Let $\{\nox_{i|k}^*,\nou_{i|k}^*,\delta_{i|k}^*\mid i\in\integerset{0,\N}\}$ be a solution obtained by solving~\eqref{eq:mpc} at time $k$.
  For each $k$, \cref{eq:drccm_conditions} give
  $
    V(\x(k),\nox_{0|k}^*)
    \ge  \sqrt{\alpha_1}\|\x(k)-\nox_{0|k}^*\|
  $.
  Moreover, it holds that $\nox_{0|k}^* \to 0_{\nx}, \delta_{0|k}^* \to 0$ as $k\to\infty$.
  Therefore, by the triangle inequality and \cref{eq:tubeinit} , we have
  \begin{align*}
    \|\x(k)\| & \le \|\x(k)-\nox_{0|k}^*\| + \|\nox_{0|k}^*\|                         \\
              & \le \frac{1}{\sqrt{\alpha_1}}V(\x(k),\nox_{0|k}^*) + \|\nox_{0|k}^*\| \\
              & = \frac{1}{\sqrt{\alpha_1}}\delta_{0|k}^* + \|\nox_{0|k}^*\| \to 0.
  \end{align*}
\end{proof}

\subsubsection*{Exponential Stability of the Closed-Loop System}
In particular, when we use the terminal ingredients constructed in \cref{prop:terminalset}, the resulting closed-loop system is exponentially stable.

\begin{theorem}\label{thm:es}
  Under the conditions of \cref{thm:tmpc}, and if the terminal ingredients in~\cref{prob:tmpc} are constructed as in \cref{prop:terminalset}, then the resulting closed-loop system is exponentially stable on $\mathcal{X}_N$, where $\mathcal{X}_N$ is the set of initial states for which~\cref{prob:tmpc} is feasible.
\end{theorem}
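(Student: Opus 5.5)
We use the cost value $\totalcost_k$ of the solution returned by \cref{algorithm:mpc_online} as a Lyapunov function in terms of the true state $\x(k)$. To prove exponential stability on $\mathcal{X}_N$, it suffices to find constants $0<a_1\le a_2$ and $a_3>0$ such that
\begin{equation*}
  a_1\|\x(k)\|^2\le \totalcost_k \le a_2\|\x(k)\|^2,\qquad \totalcost_{k+1}\le \totalcost_k - a_3\|\x(k)\|^2 .
\end{equation*}
Then $\totalcost_k\le(1-a_3/a_2)^k\totalcost_0$, and hence $\|\x(k)\|\le\sqrt{a_2/a_1}\,(1-a_3/a_2)^{k/2}\|\x(0)\|$.

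\textbf{Decrease condition.} This is already contained in \eqref{eq:V_decrease}, which gives $\totalcost_{k+1}\le\totalcost_k-\ell(\nox^*_{0|k},\nou^*_{0|k},\delta^*_{0|k})$. This bound remains valid for the returned solution, because \cref{algorithm:mpc_online} only accepts candidates whose cost does not exceed that of the shifted solution. We then bound the stage cost from below:
\begin{equation*}
  \ell(\nox^*_{0|k},\nou^*_{0|k},\delta^*_{0|k})\ge \lambda_{\min}(Q)\|\nox^*_{0|k}\|^2+\lambda(\delta^*_{0|k})^2 .
\end{equation*}
By \eqref{eq:tubeinit} and \eqref{eq:drccm_conditions}, $\delta^*_{0|k}=V(\x(k),\nox^*_{0|k})\ge\sqrt{\alpha_1}\|\x(k)-\nox^*_{0|k}\|$. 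Using $\|\x\|^2\le 2\|\x-\nox\|^2+2\|\nox\|^2$, we obtain $\ell\ge a_3\|\x(k)\|^2$ with $a_3=\tfrac12\min\{\lambda_{\min}(Q),\lambda\alpha_1\}$. The same computation applied to the first stage term gives the lower bound $\totalcost_k\ge a_1\|\x(k)\|^2$.

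\textbf{Upper bound (main obstacle).} Here the local solution \eqref{eq:local_solution} is essential. Whenever it is feasible, the algorithm guarantees $\totalcost_k\le$ cost of \eqref{eq:local_solution}. Since $\nox=0$, $\nou=0$, and $\lift(0_{\nx})=0_{\nxlift}$, the tube recursion \eqref{eq:delta} reduces to pure contraction, so \eqref{eq:local_solution} satisfies \eqref{eq:nox}, \eqref{eq:delta}, and \eqref{eq:tubeinit}. Its cost is $\lambda V(\x(k),0)^2\bigl(\sum_{i=0}^{\N-1}(1-\rho_c)^i+(1-\rho_c)^{\N}/\rho_c\bigr)=\tfrac{\lambda}{\rho_c}V(\x(k),0)^2$. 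Taking the straight line as a test curve gives $V(\x(k),0)\le\sqrt{\alpha_2}\|\x(k)\|$, so this cost is at most $\tfrac{\lambda\alpha_2}{\rho_c}\|\x(k)\|^2$.

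Feasibility of \eqref{eq:local_solution} requires $h_j(0,0)+c_j\sqrt{\alpha_2}\|\x(k)\|\le0$. Since $h_j(0,0)<0$, this holds on a ball $\|\x\|\le r$ with $r=\min_j(-h_j(0,0))/(c_j\sqrt{\alpha_2})$, where $r=\infty$ if all $c_j=0$. The terminal condition \eqref{eq:terminal} then follows from \eqref{eq:terminal_delta_prop} and the fact that $\delta$ is nonincreasing. Thus the quadratic upper bound holds on this ball.

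\textbf{Extension to $\mathcal{X}_N$.} Outside the ball, we use a standard compactness argument. The state space $\Xspace$ is compact, and $\totalcost_k$ is bounded along trajectories by $\totalcost_0$; more generally, it is bounded by a constant $\bar J$ on the relevant sublevel set. For $\|\x\|>r$ we therefore have $\totalcost_k\le(\bar J/r^2)\|\x\|^2$, and we take $a_2=\max\{\lambda\alpha_2/\rho_c,\bar J/r^2\}$.

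The delicate point is that $\bar J$ must be uniform over $\mathcal{X}_N$. We would obtain this by stating the result on the compact sublevel set $\{\x:\totalcost\le\bar J\}\cap\mathcal{X}_N$, which is forward invariant by the decrease property, or by invoking boundedness of the cost on the compact set $\Zsafe$ through the finite horizon. With $a_1$, $a_2$, $a_3$ in hand, the exponential estimate follows as stated at the outset.
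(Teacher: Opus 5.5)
Your proposal is correct and follows the paper's proof essentially step by step. It uses the returned cost $\totalcost_k$ as Lyapunov function, gets the lower and decrease bounds from $\delta^*_{0|k}=V(\x(k),\nox^*_{0|k})\ge\sqrt{\alpha_1}\|\x(k)-\nox^*_{0|k}\|$ together with \eqref{eq:V_decrease}, and gets the quadratic upper bound from the local solution \eqref{eq:local_solution} on a ball $\|\x(k)\|<r$ plus a uniform cost bound $\bar J$ outside it; your constant $\tfrac12\min\{\lambda_{\min}(Q),\lambda\alpha_1\}$ is just a slightly cruder version of the paper's $\tfrac{q\lambda\alpha_1}{q+\lambda\alpha_1}$.

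Two details need tightening, and the paper handles both. First, $M\preceq\alpha_2 I$ is only assumed on $\Zsafe$, so the straight-line bound $V(\x(k),0_{\nx})\le\sqrt{\alpha_2}\|\x(k)\|$ needs $r$ small enough that $(s\x(k),0_{\nuinput})\in\Zsafe$ for all $s\in[0,1]$. Second, commit to your second option for $\bar J$: compactness of $\Zsafe$ and continuity of $\ell,\ell_f,\lift,V$ bound the cost of every feasible, hence of the returned, solution uniformly on $\mathcal{X}_N$. The sublevel-set option does not define a set of states, since $\totalcost_k$ depends on the algorithm's history and not only on $\x(k)$, and it would at best give the result on a subset of $\mathcal{X}_N$.
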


\begin{proof}
  We prove that the cost sequence $\totalcost_k$ generated by~\cref{algorithm:mpc_online} satisfies quadratic bounds and a geometric decay estimate.
  Define $q$ as the minimum eigenvalue of $Q$ in~\cref{eq:cost}.
  From~\cref{eq:tubeinit} and~\cref{eq:drccm_conditions}, for every time $k$, we have
  \[ \delta_{0|k}^* = V(\x(k),\nox_{0|k}^*) \ge \sqrt{\alpha_1}\|\x(k)-\nox_{0|k}^*\|, \]
  so, it holds that
  \begin{align*}
        & \ell(\nox_{0|k}^*,\nou_{0|k}^*, \delta^*_{0|k})                                                                                                              \\
    \ge & q \|\nox_{0|k}^*\|^2 + \lambda \alpha_1 \|\x(k)-\nox_{0|k}^*\|^2                                                                                             \\
    \ge & (q + \lambda \alpha_1) \|\frac{\lambda \alpha_1}{q + \lambda \alpha_1}\x(k) - \nox_{0|k}^*\|^2  + \frac{q\lambda \alpha_1}{q + \lambda \alpha_1} \|\x(k)\|^2 \\
    \ge & \frac{q\lambda \alpha_1}{q + \lambda \alpha_1} \|\x(k)\|^2.
  \end{align*}
  Therefore, by combining with~\cref{eq:V_decrease}, we have
  \begin{equation}\label{eq:cost_decrease}
    \totalcost_k  \ge \gamma_1 \|\x(k)\|^2, \quad\totalcost_{k+1}  \le \totalcost_k - \gamma_1 \|\x(k)\|^2,
  \end{equation}
  where $\gamma_1 = \frac{q \lambda \alpha_1}{q + \lambda \alpha_1}$.
  Finally, we derive a quadratic upper bound for $\totalcost_k$.
  Since $h_j(0_{\nx},0_{\nuinput}) < 0$ for all $j \in \integerset{1, n_h}$, there exists a constant $r > 0$ such that if $\|\x(k)\| < r$, then $\{\nox_{i|k}^\text{loc},\nou_{i|k}^\text{loc},\delta_{i|k}^\text{loc}\mid i\in\integerset{0,\N}\}$ in~\cref{eq:local_solution} is a feasible candidate for~\cref{prob:tmpc} and $(s\x(k), 0_{\nuinput}) \in \Zsafe$ for all $s \in [0,1]$.
  The cost of this candidate is
  \begin{equation*}
    \begin{aligned}
        & \left(\lambda \left(\sum_{i=0}^{\N-1} (1-\rho_c)^{i}\right) + \frac{\lambda}{\rho_c} (1-\rho_c)^{\N}\right) V(\x(k),0_{\nx})^2 \\
      = & \frac{\lambda}{\rho_c} V(\x(k),0_{\nx})^2.
    \end{aligned}
  \end{equation*}
  Since~\cref{algorithm:mpc_online} selects a candidate whose cost is no larger than that of the feasible local solution in~\cref{eq:local_solution}, and by~\cref{eq:drccm_conditions}, it holds that
  \begin{equation}\label{eq:cost_upper_bound}
    \begin{aligned}
      \totalcost_k & \le \frac{\lambda}{\rho_c} V(\x(k),0_{\nx})^2                                                                      \\
                   & \le \frac{\lambda}{\rho_c} ( \int_0^1 \|\x(k)\|_{M(s\x(k))} ds )^2 \le \frac{\lambda \alpha_2}{\rho_c} \|\x(k)\|^2
    \end{aligned}
  \end{equation}
  for all $\x(k)$ which satisfy $\|\x(k)\| < r$.
  Next, by the compactness of $\Zsafe$, the continuity of $\ell, \Fapprox, \lift, V$, and the formulation of~\cref{prob:tmpc}, there exists a constant $\bar \totalcost$ such that $\totalcost_k \le \bar \totalcost$ holds for all $\x(k) \in \mathcal{X}_N$.
  Therefore, we have
  \begin{equation}\label{eq:cost_upper_bound_large}
    \totalcost_k \le \frac{\bar \totalcost}{r^2} \|\x(k)\|^2
  \end{equation}
  for all $\x(k)$ which satisfy $\|\x(k)\| \ge r$ and $\x(k) \in \mathcal{X}_N$.
  Combining~\cref{eq:cost_upper_bound,eq:cost_upper_bound_large}, for all $\x(k) \in \mathcal{X}_N$,
  we have
  \begin{equation}\label{eq:cost_upper_bound_final}
    \totalcost_k \le \gamma_2 \|\x(k)\|^2,
  \end{equation}
  where we choose $\gamma_2 = \max\{2\gamma_1, \frac{\lambda \alpha_2}{\rho_c}, \frac{\bar \totalcost}{r^2}\}$, so that $0<\gamma_1/\gamma_2<1$.
  Combining~\cref{eq:cost_upper_bound_final} with~\cref{eq:cost_decrease} yields
  \begin{equation}\label{eq:cost_decrease_final}
    \totalcost_{k+1}
    \le \totalcost_k-\gamma_1\|\x(k)\|^2
    \le \left(1-\frac{\gamma_1}{\gamma_2}\right)\totalcost_k.
  \end{equation}
  for all $\x(k) \in \mathcal{X}_N$. By~\cref{thm:tmpc}, we have $\x(k) \in \mathcal{X}_N$ for all $k\ge0$ when $\x(0) \in \mathcal{X}_N$ because of recursive feasibility.
  So, iterating~\cref{eq:cost_decrease_final} and using~\cref{eq:cost_decrease} and~\cref{eq:cost_upper_bound_final} gives
  \begin{equation*}
    \gamma_1\|\x(k)\|^2 \le \totalcost_k \le \left(1-\frac{\gamma_1}{\gamma_2}\right)^k\totalcost_0 \le \gamma_2\left(1-\frac{\gamma_1}{\gamma_2}\right)^k\|\x(0)\|^2
  \end{equation*}
  for all $k\ge0$ and $\x(0) \in \mathcal{X}_N$.
  This implies
  \begin{equation*}
    \|\x(k)\| \le \sqrt{\frac{\gamma_2}{\gamma_1}}\|\x(0)\|\left(\sqrt{1-\frac{\gamma_1}{\gamma_2}}\right)^k
  \end{equation*}
  for all $k\ge0$ and $\x(0) \in \mathcal{X}_N$, which shows that the closed-loop system is exponentially stable on $\mathcal{X}_N$.
\end{proof}

Combining these deterministic convergence results with the high-probability SafEDMD error certificate yields the following finite-data guarantee.

\begin{corollary}
  Let the data set $\dataset$ be generated according to the sampling scheme in~\cref{sec:safedmd} and let $\beta \in (0,1)$ be a probabilistic tolerance. Suppose $\K \geq \K_0$, \cref{assump:drccm} and \cref{ass:terminalset} hold, and \cref{prob:tmpc} is feasible at time $k=0$. Then, with probability at least $1-\beta$ with respect to $\dataset$, \cref{prob:tmpc} is feasible for all $k\ge0$, the original constraints satisfy
  \[
    (\x(k), \uinput(k)) \in \Zsafe, \quad \forall k\ge0,
  \]
  and
  \[ \lim_{k\to\infty}\|\x(k)\| = 0. \]
  In particular, if the terminal ingredients in~\cref{prob:tmpc} are constructed as in \cref{prop:terminalset}, then the resulting closed-loop system is exponentially stable on $\mathcal{X}_N$, where $\mathcal{X}_N$ is the set of initial states for which~\cref{prob:tmpc} is feasible.
\end{corollary}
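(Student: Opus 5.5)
The plan is to reduce the statement to the deterministic results already proved, conditioned on a single high-probability event. Let $\mathcal{E}$ denote the event, over the draw of $\dataset$, that the proportional error bound~\eqref{eq:error_bound_proportional} holds simultaneously for all $\x\in\Xspace$ and $\uinput\in\Uspace$. Since $\K\ge\K_0$, \cref{lemma:proportional_error_bound} gives $\Pr(\mathcal{E})\ge 1-\beta$ with respect to $\mu_{\Xspace}^{\otimes \K(1+\nuinput)}$.

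The key observation is that every subsequent guarantee is a deterministic consequence of~\eqref{eq:error_bound_proportional} together with \cref{assump:drccm} and \cref{ass:terminalset}. In particular, the bound is a uniform statement over $\Zsafe\subseteq\Xspace\times\Uspace$ and does not depend on the closed-loop trajectory. Consequently, the closed-loop states $\x(k)$ themselves need not be independent of the data.

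Fix an outcome in $\mathcal{E}$. Then the representation~\eqref{eq:uncertain_system_nonsmooth}, and via \cref{lemma:equivalent_uncertainty} the representation~\eqref{eq:uncertain_system}, is valid for every pair in $\Zsafe$. This yields three conclusions in turn.

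\textbf{Recursive feasibility and convergence.} \cref{thm:tmpc} gives recursive feasibility from feasibility at $k=0$. \cref{cor:state_convergence} then gives $\|\x(k)\|\to0$.

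\textbf{Constraint satisfaction.} At each time $k$ the applied input is $\uinput(k)=\kappa(\x(k),\nox^*_{0|k},\nou^*_{0|k})$. Constraint~\eqref{eq:tubeinit} gives $V(\x(k),\nox^*_{0|k})=\delta^*_{0|k}$, and constraint~\eqref{eq:constraint} at $i=0$ gives $h_j(\nox^*_{0|k},\nou^*_{0|k})+c_j\delta^*_{0|k}\le0$. \cref{prop:nominal_constraint} then yields $(\gamma^*(s),\gamma^u(s))\in\Zsafe$ for all $s\in[0,1]$. Evaluating at $s=1$ gives $(\x(k),\uinput(k))\in\Zsafe$, which is the $k=0$ case of \cref{theorem:tube_feedback} applied at each time step.

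\textbf{Exponential stability.} If the terminal ingredients are those of \cref{prop:terminalset}, then \cref{thm:es} applies verbatim on $\mathcal{X}_N$.

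The main point requiring care is the measurability and ordering of quantifiers. The constants $\cx,\cu$, the matrices $A,B_i$, the metric $M$, the gain $K$, the constants $c_j$, the terminal set $\liXf$, and hence the feasible set $\mathcal{X}_N$ all depend on $\dataset$. The feasibility-at-$k=0$ and \cref{assump:drccm} hypotheses are therefore stated for the realized data. I would phrase the result so that, on $\mathcal{E}$, any realization satisfying these hypotheses enjoys all the conclusions. The probability bound then follows because the set of bad outcomes is contained in $\mathcal{E}^c$, so no union bound over time steps is needed.

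One further check is needed: the true successor $\Freal(\x(k),\uinput(k))$ must stay in $\Xspace$, where the certificate applies. This is immediate because the constraint result gives $(\x(k+1),\uinput(k+1))\in\Zsafe\subseteq\Xspace\times\Uspace$ at every step.
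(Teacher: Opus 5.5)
Your proposal is correct and follows essentially the same route as the paper, which gives no separate proof of this corollary. The paper obtains it by restricting to the single probability-$(1-\beta)$ event of \cref{lemma:proportional_error_bound}, on which \cref{thm:tmpc}, \cref{cor:state_convergence}, \cref{thm:es} and the tube/constraint results hold deterministically; you additionally make explicit two details the paper leaves implicit: applying \cref{prop:nominal_constraint} at each step to the re-optimized solution instead of citing \cref{theorem:tube_feedback} for a fixed nominal sequence, and noting that all bad outcomes lie in the complement of that uniform event.
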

Therefore, the proposed TMPC framework drives the sampled true state of the unknown system to the target state while satisfying the original nonlinear constraints at every sampling instant even under Koopman approximation errors without requiring a globally optimal solution to the MPC problem.

\section{Numerical Experiments}\label{section:numerical_experiment}

\subsection{Inverted Pendulum}\label{sec:inverted_pendulum}

\subsubsection*{System and Parameters}
We first consider an inverted pendulum:
\begin{equation}
  \begin{bmatrix}
    \dot{x}_1 \\
    \dot{x}_2
  \end{bmatrix}
  =
  \begin{bmatrix}
    x_2 \\
    \dfrac{g}{\ell} \sin x_1 - \dfrac{b}{m \ell^2} x_2 + \dfrac{1}{m \ell^2} u
  \end{bmatrix}
\end{equation}
with $g=\SI{9.81}{\meter\cdot\second^{-2}}$, $\ell=\SI{1}{\meter}$, $m=\SI{1}{\kilogram}$, and $b=\SI{0.01}{\newton\cdot\meter\cdot\second\cdot\radian^{-1}}$.
Here, $x=[x_1\ x_2]^\top\in\mathbb{R}^2$, where $x_1$ is the angle~($\si{\radian}$) and
$x_2$ is the angular velocity~($\si{\radian\cdot\second^{-1}}$), and $u\in\mathbb{R}$ is the control torque~($\si{\newton\cdot\meter}$).
The constraints are
$-[\SI{1}{\radian}\ \SI{2}{\radian\cdot\second^{-1}}]^\top\le x\le[\SI{1}{\radian}\ \SI{2}{\radian\cdot\second^{-1}}]^\top$ and $\SI{-20}{\newton\cdot\meter} \le u \le \SI{20}{\newton\cdot\meter}$.
The controller is required to steer the state from the initial condition
$\x(0)=[0.2\ 1]^\top$ to the target $\xref=[0\ 0]^\top$.

We discretize the continuous system with sampling time $T_s=\SI{0.01}{\second}$.
We use the observables $\lift(\x) = [x_1\ x_2\ \sin x_1]^\top$ and uniformly sample $d=10000$ points from the state-constraint set to learn a bilinear Koopman realization.
For~\eqref{eq:error_bound_proportional}, we set $\cx = \cu = 3 \times 10^{-4}$ following~\cite{Strasser+26}.
For \cref{algorithm:mpc_online}, we use
$Q=\mathrm{diag}\{1,1\}$, $R=0.001$, $\lambda= 100$, horizon $N=20$, and the computation-time limit $\tau=T_s$. Simulation time is $T=\SI{4}{\second}$, i.e., $400$ time steps.
Some of the computed parameters are listed in~\cref{appendix:parameter_setting_proposed}.

All computations were performed in MATLAB R2025b on macOS Tahoe 26.3
on an Apple M4 Max MacBook Pro with a 16-core CPU and \SI{128}{\giga\byte} memory.
The MPC problem was modeled in CasADi~\cite{Andersson+18} and solved by IPOPT~\cite{WB06}.

\subsubsection*{Simulation Result}
\Cref{fig:rmpc_trajectory} shows simulation results.
\begin{figure}[tb]
  \centering
  \includegraphics[width=\linewidth]{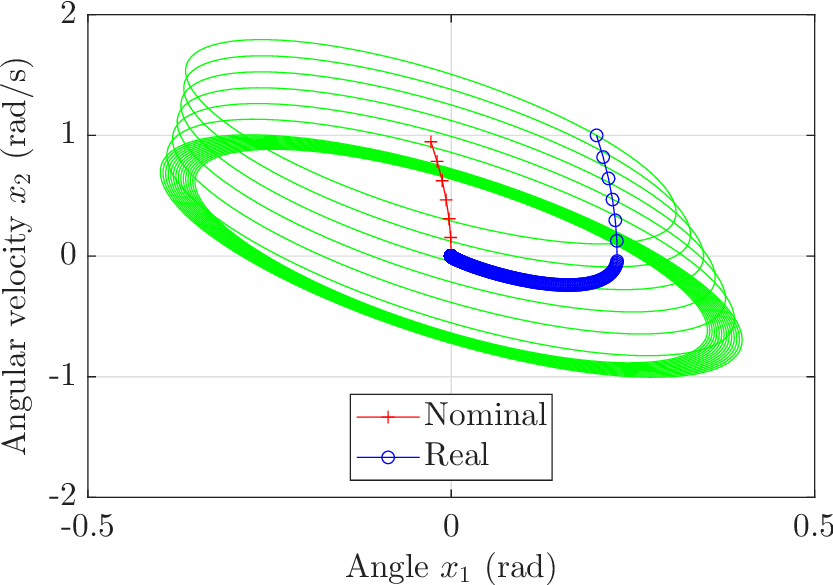}
  \caption{Closed-loop trajectory, nominal trajectory, and tubes under the proposed method for an inverted pendulum system.}
  \label{fig:rmpc_trajectory}
\end{figure}
Blue circles denote the true states, red plus signs denote nominal states from the MPC optimization at the first step, and green ellipses
indicate tubes around the nominal states.
The true state converges to the target while the tubes remain inside the state constraints, illustrating the constraint-satisfaction property of the proposed method.

\subsubsection*{Comparison with Related Works}
Furthermore, we compare the proposed method with the baseline methods~\cite{Schimperna+25, Schimperna+26, JL26}.
The plant, state and input constraints, initial condition, and reference are the same as above.
For the baseline methods~\cite{Schimperna+25, Schimperna+26, JL26}, we replace the original identification procedures with the SafEDMD procedure used above, employing the same observables $\hat{\Phi}(\x)$ and $d = 10,000$ training samples for each of the constant inputs $\uinput=0$ and $\uinput=1$. This reduces differences arising from the choice of identification method, rather than reproducing the complete learning-and-control pipelines of the original studies.
For the baselines requiring a constant approximation-error bound, we use the design bound
\begin{equation}\label{eq:constant_error_bound}
  \max_{(\x,\uinput)\in\Zsafe}(c_x\|\lift(\x)\|+c_u\|\uinput\|)=6.777 \times 10^{-3},
\end{equation}
where $\cx = \cu = 3 \times 10^{-4}$, as in the proposed method.
For RCCM-based TMPC in a lifted space~\cite{JL26}, as the authors mentioned in their paper, the computation of the terminal ingredients is practically difficult, so we set the terminal set to be the same as the lifted state constraints as in their numerical experiments.
Other parameters are listed in~\cref{appendix:parameter_setting}.

\Cref{table:inverted_pendulum_comparison} summarizes the total cost and optimization time of the proposed method and the baseline methods
and \cref{fig:inverted_pendulum_comparison} shows plots of the states and inputs over time.
The panels show the proposed method~(\cref{fig:pendulum_proposed}), the methods in~\cite{Schimperna+25} and~\cite{Schimperna+26}~(\cref{fig:pendulum_schi25,fig:pendulum_schi26}, respectively), and RCCM-based TMPC in a lifted space~\cite{JL26}~(\cref{fig:pendulum_jl26}).
\begin{table*}[tb]
  \centering
  \caption{Total cost and optimization time of the proposed method and the baseline methods~\cite{Schimperna+25, Schimperna+26, JL26} for an inverted pendulum system. The proposed method's cost is obtained with the time-limit fallback enabled: the shifted feasible candidate is used whenever optimization exceeds $\tau=T_s=\SI{0.01}{\second}$. Its reported optimization times are hypothetical runtimes obtained by allowing optimization to run to completion without enforcing this time limit, rather than the runtimes under the time-limited control policy.}
  \label{table:inverted_pendulum_comparison}
  \begin{tabular}{c|ccc} 
    \hline 
    \multirow{2}{*}{Method}     & \multirow{2}{*}{Total Cost} & \multicolumn{2}{c}{Optimization Time ($\si{\ms}$)}                  \\
    \cline{3-4}
                                &                             & Max                                                & Average        \\
    \hline 
    Proposed                    & \textbf{11.367}             & 16.440                                             & 13.423         \\
    \mbox{\cite{Schimperna+25}} & 18.459                      & \textbf{8.829}                                     & \textbf{2.734} \\
    \mbox{\cite{Schimperna+26}} & 11.588                      & 12.394                                             & 3.882          \\
    \mbox{\cite{JL26}}          & 17.656                      & 244.989                                            & 9.034          \\
    \hline 
  \end{tabular}
\end{table*}

\begin{figure*}[tb]
  \begin{minipage}{0.48\linewidth}
    \centering
    \subfloat[Proposed\label{fig:pendulum_proposed}]{%
      \includegraphics[width=\linewidth]{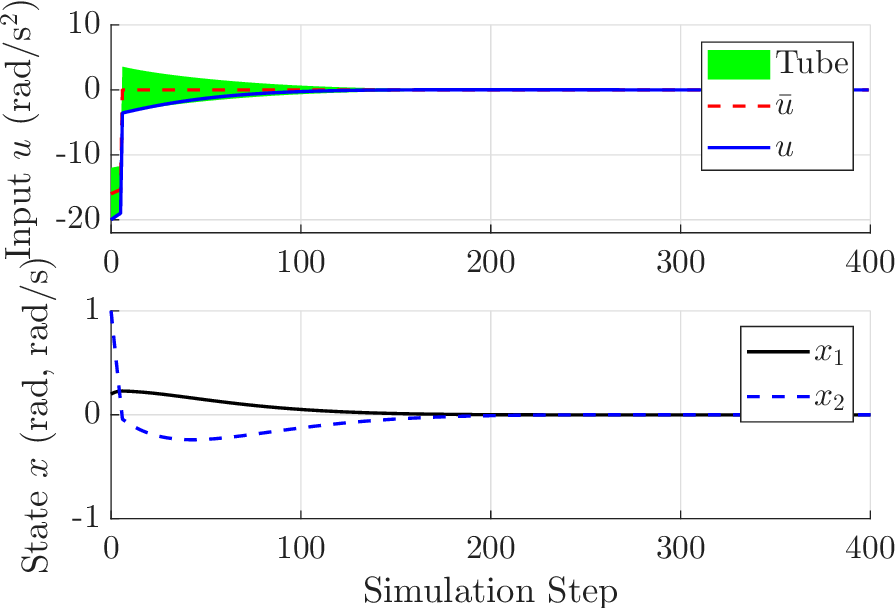}%
    }
  \end{minipage}
  \begin{minipage}{0.48\linewidth}
    \centering
    \subfloat[\cite{Schimperna+25}\label{fig:pendulum_schi25}]{%
      \includegraphics[width=\linewidth]{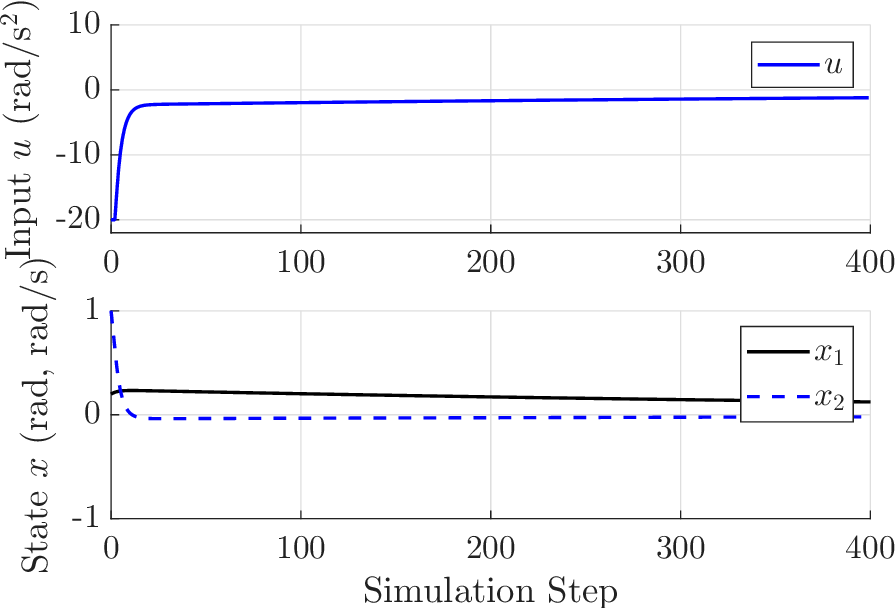}%
    }
  \end{minipage}

  \begin{minipage}{0.48\linewidth}
    \centering
    \subfloat[\cite{Schimperna+26}\label{fig:pendulum_schi26}]{%
      \includegraphics[width=\linewidth]{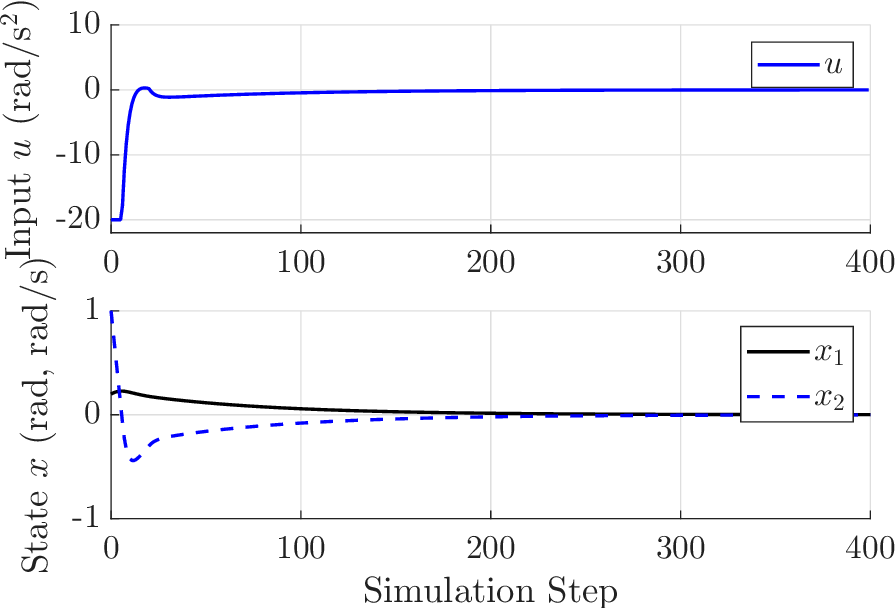}%
    }
  \end{minipage}
  \begin{minipage}{0.48\linewidth}
    \centering
    \subfloat[\cite{JL26}\label{fig:pendulum_jl26}]{%
      \includegraphics[width=\linewidth]{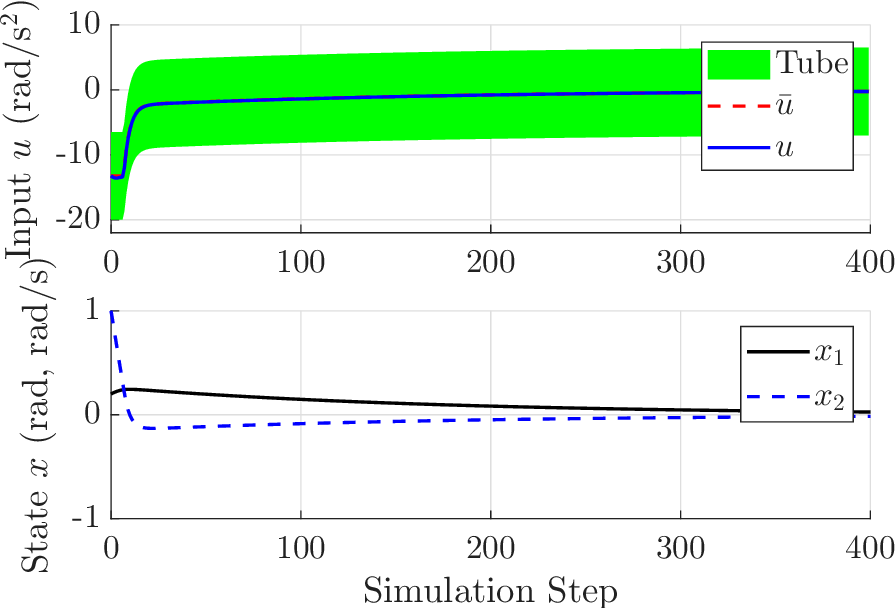}%
    }
  \end{minipage}
  \caption{Comparison of the proposed method with the baseline methods~\cite{Schimperna+25, Schimperna+26, JL26} implemented using a common SafEDMD model for an inverted pendulum system.}
  \label{fig:inverted_pendulum_comparison}
\end{figure*}
The total cost is defined as the sum of the stage cost for real states and inputs over the simulation time, i.e.,
\[\sum_{k=0}^{399} \left( \x(k)^\top Q \x(k) + \uinput(k)^\top R \uinput(k) \right).\]
The physical-state and input weights are $Q=I_2$ and $R=0.001$ respectively for all methods in the original state space~\cite{Schimperna+25, Schimperna+26}. For RCCM-based TMPC in a lifted space~\cite{JL26}, the weights are $Q=\mathrm{diag}\{1,1,0\}$ and $R=0.001$ for the lifted state and input respectively. All methods are evaluated using the same physical-state and input cost.
The maximum and average times in~\cref{table:inverted_pendulum_comparison} measure only MPC optimization, not the entire online control step.
For the proposed method, the maximum and average optimization times in the table represent hypothetical runtimes obtained by allowing the MPC optimization to run to completion without enforcing the time limit. They therefore do not represent the actual optimization runtimes under the time-limited control policy used to generate the closed-loop results: whenever optimization exceeds $\tau=T_s$, the controller uses the shifted feasible candidate instead of the new optimization result. The total cost of $11.367$ and the proposed method's trajectories in~\cref{fig:pendulum_proposed} are obtained with this fallback enabled.

\subsubsection*{Discussion}
In terms of cost, the proposed method outperforms the other methods, achieving the lowest cost.
For RCCM-based TMPC in a lifted space~\cite{JL26}, the tube tightening is overly conservative, resulting in smaller input magnitudes, as shown in~\cref{fig:pendulum_jl26}, and a higher cost.
In terms of maximum optimization time, the proposed method is slower than~\cite{Schimperna+25, Schimperna+26} but faster than~\cite{JL26}. But average optimization time is the highest among all methods.
Although the formalized MPC optimization problem itself requires higher computational resources than baselines, the proposed method guarantees both closed-loop exponential stability and constraint satisfaction even when optimization is interrupted at any point, making it possible to safely control systems even under strict computational time constraints.
Due to limitations of the solver interface used in this implementation, we use the shifted feasible candidate whenever the time limit is exceeded. Further performance improvements may be possible by using a feasible solution found within the allotted time.

\subsection{Omni-directional Robot}\label{sec:omni}

\subsubsection*{System and Parameters}
We next consider an omni-directional robot with nonlinear obstacle-avoidance constraints:
\begin{equation}
  \begin{bmatrix}
    \dot{x} \\
    \dot{y} \\
    \dot{\theta}
  \end{bmatrix}
  =
  \begin{bmatrix}
    u_1 \cos \theta - u_2 \sin \theta \\
    u_1 \sin \theta + u_2 \cos \theta \\
    u_3
  \end{bmatrix},
\end{equation}
where $x \in \mathbb{R}$ is the $x$-position~($\si{\meter}$), $y \in \mathbb{R}$ is the $y$-position~($\si{\meter}$), $\theta \in \mathbb{R}$ is the heading angle~($\si{\radian}$), $u_1 \in \mathbb{R}$ is the forward velocity~($\si{\meter\cdot\second^{-1}}$), $u_2 \in \mathbb{R}$ is the lateral velocity~($\si{\meter\cdot\second^{-1}}$), and $u_3 \in \mathbb{R}$ is the angular velocity~($\si{\radian\cdot\second^{-1}}$).
The constraints are $-2\,\si{\meter} \le x, y \le 2\,\si{\meter}$, $-\dfrac{2\pi}{3}\,\si{\radian} \le \theta \le \dfrac{2\pi}{3}\,\si{\radian}$, $\SI{-2}{\meter\cdot\second^{-1}} \le u_1, u_2 \le \SI{2}{\meter\cdot\second^{-1}}$, and $-\dfrac{\pi}{3}\,\si{\radian\cdot\second^{-1}} \le u_3 \le \dfrac{\pi}{3}\,\si{\radian\cdot\second^{-1}}$.
We also impose the nonlinear obstacle-avoidance constraints $(x+\SI{0.5}{\meter})^2+(y+\SI{1.2}{\meter})^2\ge(\SI{0.3}{\meter})^2$, $(x+\SI{0.7}{\meter})^2+(y+\SI{0.4}{\meter})^2\ge(\SI{0.3}{\meter})^2$, and $(x-\SI{0.4}{\meter})^2+(y+\SI{1.1}{\meter})^2\ge(\SI{0.2}{\meter})^2$.
The controller is required to steer the state from the initial condition $\x(0)=[-1.8\ -1.8\ \dfrac{\pi}{2}]^\top$ to the target $\xref=[0\ 0\ 0]^\top$.

We discretize the continuous system with sampling time $T_s=\SI{0.1}{\second}$.
We use the observables $\lift(\x) = [x\ y\ \theta\ \sin \theta\ \cos \theta - 1]^\top$ and uniformly sample $d=10000$ points for each of the constant inputs $\uinput \in \{0, e_1, e_2, e_3\}$ from the admissible state region to learn a bilinear Koopman realization.
For~\eqref{eq:error_bound_proportional}, we set $\cx = 4.2 \times 10^{-4}$ and $\cu = 3.5 \times 10^{-3}$.
\begin{remark}
  Since the error bounds~\cref{eq:projection_error_bound} are valid for a general class of nonlinear systems, the resulting constants $\cx$, $\cu$ may be conservative, and even smaller constants may accurately capture the approximation error of the data-driven surrogate of the true system, as mentioned in the original SafEDMD paper~\cite[Rem.~5.1]{Strasser+26}.
  Hence, following the original procedure, we also use user-selected $\cx, \cu$.
  To evaluate the empirical coverage of the error bound with these constants, we generate $10000$ additional state--input pairs, independently and uniformly sampling the state from $[-2,2]^2\times[-2\pi/3,2\pi/3]$ and the input from $[-2,2]^2\times[-\pi/3,\pi/3]$. Following the validation implementation, we test $\|\xi_x\| < \cx\|\lift(\x)\|+\cu\|\uinput\|$, where $\xi_x$ is the one-step state prediction error. This inequality holds for $9140$ out of $10000$ validation samples, giving an empirical coverage of $91.4\%$.
\end{remark}
For \cref{algorithm:mpc_online}, we use
$Q=\mathrm{diag}\{1,1,1\}$, $R=\mathrm{diag}\{0.001,0.001,0.001\}$, $\lambda= 100$, horizon $N=20$, and the computation-time limit $\tau=T_s$.
The simulation runs for $200$ time steps, corresponding to $T=\SI{20}{\second}$ at $T_s=\SI{0.1}{\second}$. Some of the computed parameters are listed in~\cref{appendix:parameter_setting_proposed_omni}.

As in~\cref{sec:inverted_pendulum}, the proposed controller uses the shifted feasible candidate instead of the new optimization result whenever optimization exceeds $\tau=T_s=\SI{0.1}{\second}$. The trajectories shown for the proposed method are generated with this fallback enabled. All other computational conditions are the same as in~\cref{sec:inverted_pendulum}.

\subsubsection*{Simulation Result}
\Cref{fig:omni_tra} shows simulation results.
Blue circles denote the true positions $(x,y)$, black arrows denote the heading angles $\theta$ with lengths proportional to the magnitude of the velocity $\sqrt{u_1^2+u_2^2}$, red plus signs denote nominal states from the MPC optimization at the first step, green ellipses
indicate tubes around nominal states, and gray circles denote the obstacles.
The true state converges to the target, and no state or input constraint violations are observed along this simulated trajectory. The plotted tubes also avoid the obstacles.

\subsubsection*{Comparison with Related Works}
\Cref{fig:omni_cons_tra} shows the results obtained using a constant-error-bound variant based on our previous work~\cite{HS26}.
We use $\max_{(\x,\uinput)\in\Zsafe}{\cx\|\lift(\x)\|+\cu\|\uinput\|}$ as the uniform error bound.
\Cref{fig:omni_tra_schimperna26} shows the results obtained using the method in~\cite{Schimperna+26}.
Although the original paper assumes a convex state-constraint set, convexity is not needed for either the method or its theoretical guarantees. Under the remaining assumptions, removing the convexity assumption leaves these guarantees unchanged.
We therefore apply the method to the nonconvex state constraints in this experiment.
As in~\cref{sec:inverted_pendulum}, we replace the original identification procedure with the SafEDMD procedure used above, employing the same observables $\hat{\Phi}(\x)$ and $d = 10000$ training samples for each of the constant inputs $\uinput \in \{0, e_1, e_2, e_3\}$.

Unlike in~\cref{sec:inverted_pendulum}, we do not evaluate the other related works~\cite{Schimperna+25,JL26} in this experiment.
One of these methods~\cite{Schimperna+25} does not incorporate state constraints into the MPC optimization problem,
whereas the other~\cite{JL26} supports only convex constraints in the lifted space. Thus, neither method is directly applicable to the present setup with the chosen state constraints and observables.
\begin{figure*}[tb]
  \centering
  \begin{minipage}[t]{0.32\linewidth}
    \centering
    \subfloat[Proposed\label{fig:omni_tra}]{%
      \includegraphics[width=\linewidth]{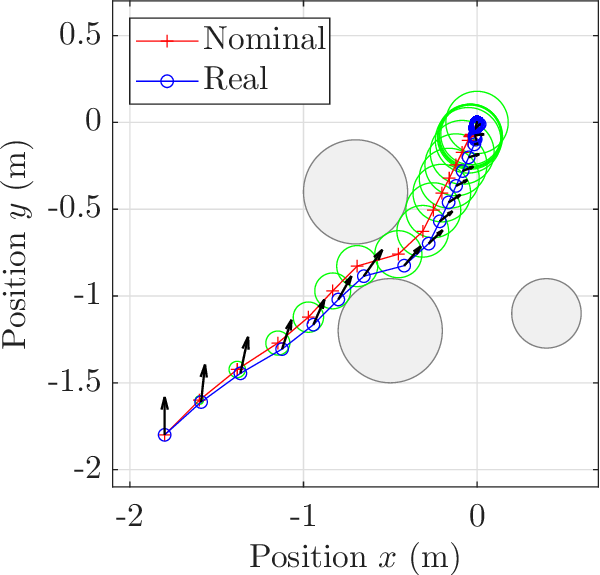}%
    }
  \end{minipage}\hfill
  \begin{minipage}[t]{0.32\linewidth}
    \centering
    \subfloat[Constant-error-bound variant~\cite{HS26}\label{fig:omni_cons_tra}]{%
      \includegraphics[width=\linewidth]{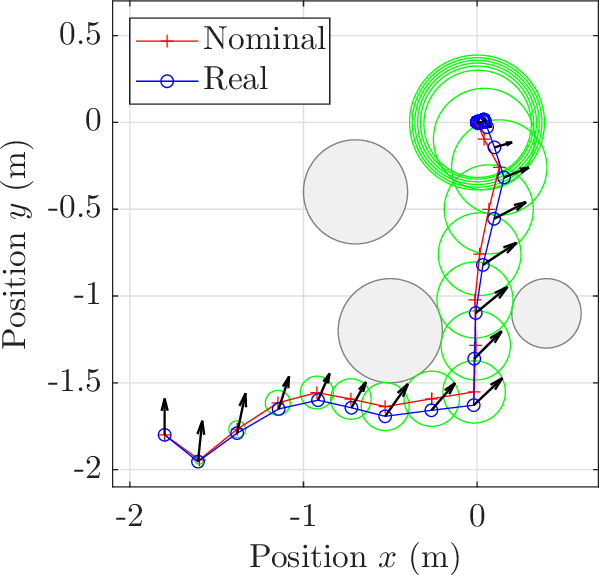}%
    }
  \end{minipage}\hfill
  \begin{minipage}[t]{0.32\linewidth}
    \centering
    \subfloat[\cite{Schimperna+26}\label{fig:omni_tra_schimperna26}]{%
      \includegraphics[width=\linewidth]{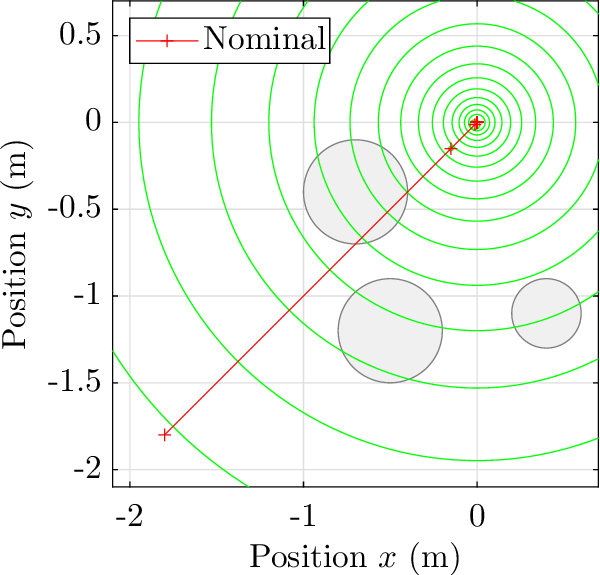}%
    }
  \end{minipage}
  \caption{Comparison of the proposed method with the constant-error-bound variant~\cite{HS26} and the method in~\cite{Schimperna+26} for an omni-directional robot with nonlinear obstacle-avoidance constraints.}
  \label{fig:omni_comparison}
\end{figure*}

\subsubsection*{Discussion}
These experiments demonstrate the advantages of the proposed method for nonlinear constraints like obstacle avoidance.
For the proposed method~(\cref{fig:omni_tra}), the planned trajectory passes through the narrow space between the large obstacles and reaches the target.
In contrast, the constant-error-bound variant based on our previous work~\cite{HS26}~(\cref{fig:omni_cons_tra}) does not use this narrow passage because the uniform error bound is overly conservative. Instead, the planned trajectory takes a less efficient route around the large obstacles.
For the method in~\cite{Schimperna+26}, as acknowledged by the authors, tube propagation is based on a constant error bound and Lipschitz constant without the contraction property resulting in the conservativeness of the constraint tightening with increasing prediction horizons.
Actually, the planned tube collides with the obstacles as shown in~\cref{fig:omni_tra_schimperna26}, and MPC optimization fails to find a feasible solution.
These results show that both the contraction property and the proportional error bound applied to the tube's time evolution contribute to its flexibility, enabling more efficient control in such strict nonlinear obstacle-avoidance constraints scenarios.

\section{Conclusion}\label{section:conclusion}

This paper has proposed a robust MPC framework for unknown nonlinear systems by combining data-driven bilinear Koopman realizations with contraction-based TMPC\@.
Using the proportional approximation-error bound obtained from SafEDMD, we constructed an error-aware predictor in the original state space through state-coordinate reprojection, avoiding the Koopman dictionary invariance assumption.
For this predictor, we developed a discrete-time RCCM-based homothetic tube construction and a tube-based MPC formulation with terminal ingredients and a tube-radius penalty.
We proved robust satisfaction of the original nonlinear constraints by the sampled closed-loop state-input pairs, recursive feasibility, and exponential stability of the sampled true state with respect to the target equilibrium with high probability over the training data without requiring a globally optimal solution to the MPC problem.
Numerical experiments, including a nonlinear obstacle-avoidance example, demonstrated robust stabilization, smaller closed-loop cost, and flexibility of the proposed method compared to existing Koopman-based MPC methods.
Future work will address the online computational cost caused by evaluating the lifting map inside the MPC optimization.
This issue may become significant when the lifting map is represented by a computationally expensive parametrization, such as a neural network~\cite{LKB18, YKH19, Haseli+26}, because the optimization may require evaluations of the lifting map, its derivatives, and possibly Hessians.
This direction would make the kernel-based Koopman realization~\cite{Strasser+25}, whose dimension scales with the number of samples, practical in our framework while ensuring true robustness through deterministic error bounds.

\appendices
\section{Parameter settings for the baseline methods in \cref{sec:inverted_pendulum}}\label[appendix]{appendix:parameter_setting}
\subsection{\cite{Schimperna+26}}
\begin{itemize}
  \item Uniform error bound~\cite[Ass.~1]{Schimperna+26}: $\eta^\epsilon = 6.777 \times 10^{-3}$ computed from~\cref{eq:constant_error_bound}
  \item Lipschitz constant~\cite[Ass.~1]{Schimperna+26}:  $\bar L =1.161$
  \item Terminal ingredients~\cite[p.988]{Schimperna+26}: $\beta = 1.5$, $c = 2.751$
\end{itemize}
\subsection{\cite{JL26}}
\begin{itemize}
  \item Disturbance bounds~\cite[(5), (6)]{JL26}: $\bar w =6.777 \times 10^{-3}$ computed from~\cref{eq:constant_error_bound} and $\bar \delta = 2 \bar w$
  \item Perturbation distribution~\cite[Algorithm 2]{JL26}: $\mathcal{U}[-\delta,  \delta]$ with $\delta = 0, 0.001, 0.0025, 0.005, 0.01$
  \item Hyperparameters for learning an RCCM~\cite[Problem 2]{JL26}: $\alpha = 0.1$, $\nu = 0.5$, $\beta = 5$, $\gamma_c = 1$, batch size 128, learning rate 0.001, and 10000 epochs
\end{itemize}
\section{Computed parameters for the proposed method in \cref{sec:inverted_pendulum}}\label[appendix]{appendix:parameter_setting_proposed}
We give the polynomial matrices $W=M^{-1}$ and $L=KW$, where $L$ corresponds to $Y$ in~\cref{rem:sos}.
The metric and differential feedback gain are recovered as
\begin{equation}\label{eq:appendix_metric_gain}
  M(\x)=W(\x)^{-1},\qquad K(\x)=L(\x)W(\x)^{-1}.
\end{equation}
For the inverted pendulum, define
\[
  p(\x)=[1\;x_1\;x_2\;x_1^2\;x_1x_2\;x_2^2]^\top.
\]
The entries are $W_{ij}(\x)=w_{ij}p(\x)$ and $L_{1j}(\x)=l_{1j}p(\x)$, with $W_{21}=W_{12}$ and the following row coefficient vectors
  {\small
\begin{align*}
  w_{11} & =[\begin{aligned}[t]
              &2.834\times10^{0}\;1.003\times10^{-3}\;-8.008\times10^{-4}\\
              &-3.709\times10^{-2}\;1.565\times10^{-2}\;3.311\times10^{-4}],
  \end{aligned}                                                                \\
  w_{12} & =[\begin{aligned}[t]
              &-4.928\times10^{0}\;-1.433\times10^{-3}\;1.545\times10^{-3}\\
              &6.844\times10^{-2}\;-5.971\times10^{-2}\;1.429\times10^{-4}],
  \end{aligned}                                                                \\
  w_{22} & =[\begin{aligned}[t]
              &1.782\times10^{1}\;-8.063\times10^{-4}\;-1.760\times10^{-3}\\
              &6.600\times10^{-3}\;2.359\times10^{-1}\;2.552\times10^{-3}],
  \end{aligned}                                                                \\
  l_{11} & =[\begin{aligned}[t]
              &-2.418\times10^{1}\;-4.108\times10^{-4}\;-3.753\times10^{-4}\\
              &2.815\times10^{-3}\;5.325\times10^{-4}\;-8.174\times10^{-4}],
  \end{aligned}                                                                \\
  l_{12} & =[\begin{aligned}[t]
              &-2.636\times10^{-3}\;1.960\times10^{-4}\;-2.252\times10^{-5}\\
              &-1.092\times10^{-3}\;-2.285\times10^{-4}\;-8.693\times10^{-4}],
  \end{aligned}
\end{align*}

  }
The contraction parameter and numerical metric bounds are
\[
  \rho_c=0.0225,\qquad \alpha_1\simeq0.0505635,\qquad
  \alpha_2\simeq0.736173.
\]
Here and in~\cref{appendix:parameter_setting_proposed_omni}, $\alpha_1$ is the smallest eigenvalue of the common lower-bound matrix, and $\alpha_2$ is the maximum eigenvalue of $M$ over the evaluation grid.
These metric bounds and the constraint-tightening constants below are numerical grid-based values, rather than certified bounds over the continuous domain.
For the constraint ordering
\[
  h(\x,\uinput)=[x_1-1\;x_2-2\;-x_1-1\;-x_2-2\;u-20\;-u-20]^\top,
\]
the tightening constants are
\[
  (c_1,\ldots,c_6)\simeq
  \begin{aligned}[t]
    ( & 3.404949,\;3.090077,\;3.404949,    \\
      & 3.090077,\;19.935587,\;19.935587).
  \end{aligned}
\]

\section{Computed parameters for the proposed method in \cref{sec:omni}}\label[appendix]{appendix:parameter_setting_proposed_omni}
We give the polynomial matrices $W$ and $L$, from which $M$ and $K$ are recovered using~\eqref{eq:appendix_metric_gain}.
The matrices $W$ and $L$ contain monomials in $(x,y,\theta)$ of total degree at most four.
Let $p_o(\x)\in\mathbb R^{35}$ contain all these monomials in the order shown in the first column of~\cref{table:omni_w_coefficients,table:omni_l_coefficients}.
The entries are $W_{ij}(\x)=w_{ij}p_o(\x)$ and $L_{ij}(\x)=l_{ij}p_o(\x)$.
The tables list each row coefficient vector vertically, rounded to four significant figures; small nonzero coefficients are retained in scientific notation.
The matrices satisfy $W_{ji}=W_{ij}$ and $L_{ji}=L_{ij}$, so only their upper-triangular entries are listed.
\begin{table*}[tb]
  \centering
  \caption{Omni-directional robot: entries of the row coefficient vectors $w_{ij}$, in basis order (four significant figures).}
  \label{table:omni_w_coefficients}
  \scriptsize
  \setlength{\tabcolsep}{3pt}
  \begin{tabular}{c|rrrrrr}
    \hline
    $p_{o,k}$         & $w_{11}[k]$            & $w_{12}[k]$            & $w_{13}[k]$            & $w_{22}[k]$            & $w_{23}[k]$            & $w_{33}[k]$            \\ \hline
    $1$               & $3.689\times10^{0}$    & $-1.873\times10^{-10}$ & $-5.644\times10^{-4}$  & $3.682\times10^{0}$    & $5.003\times10^{-4}$   & $3.686\times10^{0}$    \\
    $x$               & $-5.780\times10^{-12}$ & $3.870\times10^{-12}$  & $-6.786\times10^{-13}$ & $-1.436\times10^{-11}$ & $6.260\times10^{-13}$  & $-1.895\times10^{-11}$ \\
    $y$               & $6.889\times10^{-12}$  & $-6.337\times10^{-12}$ & $2.170\times10^{-11}$  & $3.339\times10^{-12}$  & $-2.373\times10^{-11}$ & $1.293\times10^{-11}$  \\
    $\theta$          & $1.455\times10^{-11}$  & $-1.281\times10^{-12}$ & $4.064\times10^{-13}$  & $1.122\times10^{-12}$  & $-4.044\times10^{-12}$ & $9.547\times10^{-12}$  \\
    $x^{2}$           & $6.339\times10^{-11}$  & $-3.303\times10^{-11}$ & $1.263\times10^{-11}$  & $1.181\times10^{-10}$  & $-2.089\times10^{-11}$ & $7.925\times10^{-11}$  \\
    $xy$              & $1.352\times10^{-11}$  & $-4.981\times10^{-12}$ & $2.168\times10^{-12}$  & $5.252\times10^{-12}$  & $-3.400\times10^{-12}$ & $1.841\times10^{-12}$  \\
    $y^{2}$           & $1.213\times10^{-11}$  & $-9.784\times10^{-12}$ & $7.051\times10^{-12}$  & $2.402\times10^{-11}$  & $-1.196\times10^{-11}$ & $5.688\times10^{-11}$  \\
    $x\theta$         & $2.950\times10^{-11}$  & $-1.834\times10^{-11}$ & $2.077\times10^{-11}$  & $2.821\times10^{-11}$  & $-3.068\times10^{-11}$ & $1.044\times10^{-11}$  \\
    $y\theta$         & $-2.467\times10^{-11}$ & $2.246\times10^{-11}$  & $-1.214\times10^{-11}$ & $-2.584\times10^{-11}$ & $5.466\times10^{-12}$  & $1.101\times10^{-11}$  \\
    $\theta^{2}$      & $-1.779\times10^{-11}$ & $3.623\times10^{-11}$  & $-4.787\times10^{-11}$ & $-1.558\times10^{-8}$  & $2.418\times10^{-8}$   & $3.287\times10^{-12}$  \\
    $x^{3}$           & $8.069\times10^{-12}$  & $-3.390\times10^{-12}$ & $3.611\times10^{-12}$  & $3.091\times10^{-12}$  & $-3.838\times10^{-12}$ & $1.888\times10^{-12}$  \\
    $x^{2}y$          & $-1.774\times10^{-11}$ & $1.315\times10^{-11}$  & $-6.052\times10^{-12}$ & $-3.456\times10^{-11}$ & $6.251\times10^{-12}$  & $-1.203\times10^{-11}$ \\
    $xy^{2}$          & $-5.986\times10^{-12}$ & $5.963\times10^{-12}$  & $-2.223\times10^{-12}$ & $-1.721\times10^{-11}$ & $1.798\times10^{-12}$  & $3.452\times10^{-14}$  \\
    $y^{3}$           & $-1.703\times10^{-11}$ & $1.705\times10^{-11}$  & $-9.718\times10^{-12}$ & $-8.500\times10^{-11}$ & $1.168\times10^{-11}$  & $-3.052\times10^{-11}$ \\
    $x^{2}\theta$     & $-2.608\times10^{-11}$ & $1.686\times10^{-11}$  & $-8.202\times10^{-12}$ & $-6.068\times10^{-11}$ & $1.085\times10^{-11}$  & $-1.143\times10^{-11}$ \\
    $xy\theta$        & $-5.430\times10^{-12}$ & $7.205\times10^{-12}$  & $-3.353\times10^{-12}$ & $-2.965\times10^{-11}$ & $3.440\times10^{-12}$  & $-3.607\times10^{-12}$ \\
    $y^{2}\theta$     & $1.461\times10^{-11}$  & $-4.190\times10^{-12}$ & $4.099\times10^{-14}$  & $8.436\times10^{-12}$  & $-4.006\times10^{-12}$ & $-3.564\times10^{-12}$ \\
    $x\theta^{2}$     & $3.510\times10^{-11}$  & $-1.108\times10^{-11}$ & $3.556\times10^{-12}$  & $1.985\times10^{-11}$  & $-1.207\times10^{-11}$ & $1.539\times10^{-11}$  \\
    $y\theta^{2}$     & $5.897\times10^{-11}$  & $-1.206\times10^{-10}$ & $3.720\times10^{-11}$  & $-1.076\times10^{-12}$ & $-3.529\times10^{-11}$ & $4.021\times10^{-11}$  \\
    $\theta^{3}$      & $9.369\times10^{-9}$   & $-2.283\times10^{-11}$ & $-2.496\times10^{-11}$ & $1.089\times10^{-9}$   & $-6.986\times10^{-13}$ & $2.696\times10^{-11}$  \\
    $x^{4}$           & $2.126\times10^{-11}$  & $-1.333\times10^{-11}$ & $7.700\times10^{-12}$  & $3.891\times10^{-11}$  & $-1.196\times10^{-11}$ & $2.688\times10^{-11}$  \\
    $x^{3}y$          & $-7.201\times10^{-12}$ & $5.816\times10^{-12}$  & $-4.088\times10^{-12}$ & $-1.692\times10^{-11}$ & $4.878\times10^{-12}$  & $-8.529\times10^{-12}$ \\
    $x^{2}y^{2}$      & $1.431\times10^{-11}$  & $-6.327\times10^{-12}$ & $5.125\times10^{-12}$  & $1.407\times10^{-11}$  & $-8.368\times10^{-12}$ & $1.646\times10^{-11}$  \\
    $xy^{3}$          & $-6.169\times10^{-13}$ & $2.568\times10^{-12}$  & $-1.007\times10^{-12}$ & $-9.832\times10^{-12}$ & $4.352\times10^{-13}$  & $-1.744\times10^{-12}$ \\
    $y^{4}$           & $5.946\times10^{-11}$  & $-2.670\times10^{-11}$ & $1.553\times10^{-11}$  & $1.249\times10^{-10}$  & $-3.088\times10^{-11}$ & $8.882\times10^{-11}$  \\
    $x^{3}\theta$     & $-9.909\times10^{-12}$ & $8.558\times10^{-12}$  & $-5.318\times10^{-12}$ & $-2.781\times10^{-11}$ & $6.681\times10^{-12}$  & $-1.571\times10^{-11}$ \\
    $x^{2}y\theta$    & $6.652\times10^{-12}$  & $-2.332\times10^{-12}$ & $7.645\times10^{-12}$  & $-2.017\times10^{-12}$ & $-9.510\times10^{-12}$ & $1.627\times10^{-11}$  \\
    $xy^{2}\theta$    & $-2.564\times10^{-12}$ & $5.279\times10^{-12}$  & $1.910\times10^{-12}$  & $-2.561\times10^{-11}$ & $-2.879\times10^{-12}$ & $1.957\times10^{-12}$  \\
    $y^{3}\theta$     & $1.171\times10^{-11}$  & $-7.302\times10^{-12}$ & $6.238\times10^{-12}$  & $2.344\times10^{-11}$  & $-9.229\times10^{-12}$ & $2.419\times10^{-11}$  \\
    $x^{2}\theta^{2}$ & $2.525\times10^{-11}$  & $-3.704\times10^{-12}$ & $2.215\times10^{-12}$  & $7.569\times10^{-12}$  & $-1.033\times10^{-11}$ & $3.661\times10^{-11}$  \\
    $xy\theta^{2}$    & $2.031\times10^{-11}$  & $-1.271\times10^{-12}$ & $2.871\times10^{-12}$  & $-1.435\times10^{-11}$ & $-9.836\times10^{-12}$ & $1.603\times10^{-11}$  \\
    $y^{2}\theta^{2}$ & $6.083\times10^{-12}$  & $5.196\times10^{-12}$  & $1.115\times10^{-11}$  & $-8.741\times10^{-11}$ & $-1.472\times10^{-11}$ & $3.276\times10^{-11}$  \\
    $x\theta^{3}$     & $1.344\times10^{-10}$  & $-3.200\times10^{-11}$ & $2.238\times10^{-11}$  & $1.851\times10^{-11}$  & $-4.569\times10^{-11}$ & $4.732\times10^{-11}$  \\
    $y\theta^{3}$     & $3.967\times10^{-11}$  & $5.527\times10^{-12}$  & $-5.604\times10^{-11}$ & $6.478\times10^{-11}$  & $1.273\times10^{-12}$  & $2.304\times10^{-11}$  \\
    $\theta^{4}$      & $1.781\times10^{-10}$  & $-3.993\times10^{-8}$  & $1.564\times10^{-9}$   & $-5.209\times10^{-8}$  & $3.336\times10^{-10}$  & $7.527\times10^{-8}$   \\
    \hline
  \end{tabular}
\end{table*}
\begin{table*}[tb]
  \centering
  \caption{Omni-directional robot: entries of the row coefficient vectors $l_{ij}$, in basis order (four significant figures).}
  \label{table:omni_l_coefficients}
  \scriptsize
  \setlength{\tabcolsep}{3pt}
  \begin{tabular}{c|rrrrrr}
    \hline
    $p_{o,k}$         & $l_{11}[k]$            & $l_{12}[k]$            & $l_{13}[k]$            & $l_{22}[k]$            & $l_{23}[k]$            & $l_{33}[k]$            \\ \hline
    $1$               & $-1.338\times10^{0}$   & $1.783\times10^{-3}$   & $-1.466\times10^{-11}$ & $-1.339\times10^{0}$   & $3.061\times10^{-11}$  & $-9.799\times10^{-1}$  \\
    $x$               & $-2.606\times10^{-12}$ & $-6.361\times10^{-13}$ & $-4.673\times10^{-13}$ & $7.556\times10^{-12}$  & $-1.339\times10^{-12}$ & $1.259\times10^{-12}$  \\
    $y$               & $1.992\times10^{-11}$  & $-4.876\times10^{-12}$ & $3.305\times10^{-12}$  & $2.607\times10^{-12}$  & $-1.737\times10^{-12}$ & $5.115\times10^{-12}$  \\
    $\theta$          & $1.232\times10^{-10}$  & $-3.824\times10^{-12}$ & $3.267\times10^{-12}$  & $-1.805\times10^{-11}$ & $-1.973\times10^{-13}$ & $3.886\times10^{-12}$  \\
    $x^{2}$           & $2.821\times10^{-13}$  & $5.319\times10^{-13}$  & $-4.063\times10^{-13}$ & $-3.792\times10^{-12}$ & $9.744\times10^{-13}$  & $-1.792\times10^{-12}$ \\
    $xy$              & $2.538\times10^{-12}$  & $-9.368\times10^{-13}$ & $4.753\times10^{-13}$  & $2.920\times10^{-13}$  & $-3.723\times10^{-13}$ & $9.107\times10^{-13}$  \\
    $y^{2}$           & $-5.022\times10^{-12}$ & $1.449\times10^{-12}$  & $-1.680\times10^{-12}$ & $-5.386\times10^{-14}$ & $1.144\times10^{-12}$  & $-3.698\times10^{-12}$ \\
    $x\theta$         & $3.919\times10^{-12}$  & $-2.627\times10^{-12}$ & $8.979\times10^{-13}$  & $5.648\times10^{-12}$  & $-1.839\times10^{-12}$ & $3.232\times10^{-12}$  \\
    $y\theta$         & $-5.346\times10^{-12}$ & $1.659\times10^{-12}$  & $-2.346\times10^{-12}$ & $-4.059\times10^{-12}$ & $8.679\times10^{-13}$  & $-2.861\times10^{-12}$ \\
    $\theta^{2}$      & $-4.057\times10^{-12}$ & $2.030\times10^{-12}$  & $-3.561\times10^{-12}$ & $-4.057\times10^{-12}$ & $5.273\times10^{-12}$  & $-1.807\times10^{-11}$ \\
    $x^{3}$           & $3.552\times10^{-13}$  & $-5.084\times10^{-13}$ & $6.635\times10^{-14}$  & $1.633\times10^{-12}$  & $-4.484\times10^{-13}$ & $6.320\times10^{-13}$  \\
    $x^{2}y$          & $9.030\times10^{-13}$  & $-3.537\times10^{-13}$ & $5.969\times10^{-14}$  & $-5.615\times10^{-14}$ & $-7.145\times10^{-14}$ & $1.033\times10^{-13}$  \\
    $xy^{2}$          & $-2.288\times10^{-13}$ & $7.778\times10^{-15}$  & $-3.011\times10^{-13}$ & $-2.424\times10^{-13}$ & $1.378\times10^{-13}$  & $-5.469\times10^{-13}$ \\
    $y^{3}$           & $4.112\times10^{-12}$  & $-1.486\times10^{-12}$ & $8.935\times10^{-13}$  & $1.370\times10^{-12}$  & $-4.578\times10^{-13}$ & $1.133\times10^{-12}$  \\
    $x^{2}\theta$     & $1.880\times10^{-12}$  & $-2.773\times10^{-13}$ & $-1.027\times10^{-13}$ & $-2.550\times10^{-12}$ & $4.942\times10^{-13}$  & $-8.985\times10^{-13}$ \\
    $xy\theta$        & $4.340\times10^{-13}$  & $-3.158\times10^{-13}$ & $-4.035\times10^{-13}$ & $-3.157\times10^{-13}$ & $1.266\times10^{-13}$  & $-7.163\times10^{-13}$ \\
    $y^{2}\theta$     & $6.615\times10^{-12}$  & $-2.134\times10^{-12}$ & $1.011\times10^{-12}$  & $-1.657\times10^{-12}$ & $-1.141\times10^{-12}$ & $3.119\times10^{-12}$  \\
    $x\theta^{2}$     & $2.092\times10^{-12}$  & $-1.816\times10^{-12}$ & $-3.492\times10^{-13}$ & $2.831\times10^{-12}$  & $-8.027\times10^{-13}$ & $4.237\times10^{-13}$  \\
    $y\theta^{2}$     & $1.338\times10^{-11}$  & $-4.790\times10^{-12}$ & $1.425\times10^{-12}$  & $5.614\times10^{-12}$  & $-3.414\times10^{-13}$ & $-7.398\times10^{-14}$ \\
    $\theta^{3}$      & $1.585\times10^{-3}$   & $-7.486\times10^{-12}$ & $8.173\times10^{-13}$  & $-2.107\times10^{-10}$ & $-2.137\times10^{-12}$ & $7.149\times10^{-12}$  \\
    $x^{4}$           & $4.246\times10^{-13}$  & $-1.651\times10^{-14}$ & $-7.393\times10^{-14}$ & $-8.755\times10^{-13}$ & $1.987\times10^{-13}$  & $-3.799\times10^{-13}$ \\
    $x^{3}y$          & $5.243\times10^{-13}$  & $-2.261\times10^{-13}$ & $-1.671\times10^{-15}$ & $-2.248\times10^{-14}$ & $-3.689\times10^{-14}$ & $7.147\times10^{-15}$  \\
    $x^{2}y^{2}$      & $3.976\times10^{-13}$  & $-1.875\times10^{-13}$ & $-6.733\times10^{-14}$ & $-8.843\times10^{-14}$ & $3.014\times10^{-15}$  & $-1.194\times10^{-13}$ \\
    $xy^{3}$          & $1.182\times10^{-12}$  & $-4.599\times10^{-13}$ & $1.274\times10^{-13}$  & $1.645\times10^{-14}$  & $-1.269\times10^{-13}$ & $2.426\times10^{-13}$  \\
    $y^{4}$           & $-9.275\times10^{-13}$ & $1.859\times10^{-13}$  & $-4.979\times10^{-13}$ & $4.737\times10^{-13}$  & $4.036\times10^{-13}$  & $-1.387\times10^{-12}$ \\
    $x^{3}\theta$     & $1.100\times10^{-12}$  & $-7.135\times10^{-13}$ & $5.633\times10^{-14}$  & $1.077\times10^{-12}$  & $-3.701\times10^{-13}$ & $4.659\times10^{-13}$  \\
    $x^{2}y\theta$    & $1.075\times10^{-12}$  & $-4.787\times10^{-13}$ & $-1.078\times10^{-13}$ & $-2.010\times10^{-13}$ & $-1.585\times10^{-14}$ & $-1.962\times10^{-13}$ \\
    $xy^{2}\theta$    & $2.191\times10^{-12}$  & $-8.784\times10^{-13}$ & $1.369\times10^{-13}$  & $-3.960\times10^{-14}$ & $-1.907\times10^{-13}$ & $2.624\times10^{-13}$  \\
    $y^{3}\theta$     & $-3.553\times10^{-13}$ & $6.396\times10^{-14}$  & $-7.438\times10^{-13}$ & $-2.549\times10^{-12}$ & $8.935\times10^{-15}$  & $-2.789\times10^{-13}$ \\
    $x^{2}\theta^{2}$ & $2.793\times10^{-12}$  & $-8.673\times10^{-13}$ & $-2.926\times10^{-13}$ & $-2.069\times10^{-12}$ & $3.465\times10^{-13}$  & $-1.019\times10^{-12}$ \\
    $xy\theta^{2}$    & $4.541\times10^{-12}$  & $-1.846\times10^{-12}$ & $1.193\times10^{-13}$  & $-1.982\times10^{-13}$ & $-3.277\times10^{-13}$ & $2.712\times10^{-13}$  \\
    $y^{2}\theta^{2}$ & $1.274\times10^{-12}$  & $-1.093\times10^{-12}$ & $-8.193\times10^{-13}$ & $3.669\times10^{-12}$  & $9.951\times10^{-13}$  & $-4.123\times10^{-12}$ \\
    $x\theta^{3}$     & $1.214\times10^{-11}$  & $-5.604\times10^{-12}$ & $1.798\times10^{-13}$  & $1.797\times10^{-12}$  & $-1.204\times10^{-12}$ & $1.102\times10^{-12}$  \\
    $y\theta^{3}$     & $6.673\times10^{-12}$  & $-2.418\times10^{-12}$ & $-1.668\times10^{-12}$ & $-2.347\times10^{-11}$ & $-1.377\times10^{-12}$ & $2.712\times10^{-12}$  \\
    $\theta^{4}$      & $1.780\times10^{-1}$   & $-9.359\times10^{-5}$  & $-1.684\times10^{-12}$ & $1.782\times10^{-1}$   & $5.743\times10^{-12}$  & $-9.536\times10^{-4}$  \\
    \hline
  \end{tabular}
\end{table*}

The contraction parameter and numerical metric bounds are
\[
  \rho_c=0.05,\qquad \alpha_1\simeq0.2710559,\qquad
  \alpha_2\simeq0.2715938.
\]

To specify the scaling and ordering of $c_j$, let
$\bar{\x}=[2\;2\;2\pi/3]^\top$ and
$\bar{\uinput}=[2\;2\;\pi/3]^\top$.
The first twelve constraints are ordered as
\[
  h_{1:12}(\x,\uinput)=
  [ (\x-\bar{\x})^\top\;(-\x-\bar{\x})^\top\;
    (\uinput-\bar{\uinput})^\top\;(-\uinput-\bar{\uinput})^\top]^\top.
\]
For obstacles with centers $o_i$ and radii $r_i$, in the order given in~\cref{sec:omni}, the implementation uses the equivalent distance-form constraints
\[
  h_{12+i}(\x,\uinput)=r_i-\|[x\;y]^\top-o_i\|,\qquad i=1,2,3.
\]
The corresponding tightening constants are
\begin{align*}
  c_1=\cdots=c_6         & \simeq1.920681,                         \\
  (c_7,c_8,c_9)          & \simeq(1.094520,\;1.089492,\;0.519961), \\
  (c_{10},c_{11},c_{12}) & =(c_7,c_8,c_9),                         \\
  (c_{13},c_{14},c_{15}) & \simeq(1.920533,\;1.920531,\;1.920190).
\end{align*}

\section*{References}
\bibliographystyle{IEEEtran}
\bibliography{reference}

%
\end{document}